\providecommand\@dotsep{5}
\def\listtodoname{List of Todos}
\def\listoftodos{\@starttoc{tdo}\listtodoname}
\newtheorem{theorem}{Theorem}[section]
\newtheorem{proposition}[theorem]{Proposition}
\newtheorem{lemma}[theorem]{Lemma}
  \theoremstyle{definition}
\newtheorem{definition}[theorem]{Definition}
\newtheorem{notation}[theorem]{Notation}
\newtheorem{assumption}[theorem]{Assumption}
\newtheorem{remark}[theorem]{Remark}
\newcommand{\dbZ}{\mathbb{Z}}
\newcommand{\Z}{\mathbb{Z}}
\newcommand{\calP}{{\mathcal P}}
\newcommand{\SUB}{\mathcal{ISO}}
\newcommand{\beq}{\begin{equation}}
\newcommand{\eeq}{\end{equation}}
\DeclareMathOperator{\mcd}{mcd}
\DeclareMathOperator{\rank}{rank}
\DeclareMathOperator{\Ind}{Ind}
\begin{document}

\title[]{On the group cohomology  of groups of the form $\dbZ^n\rtimes \dbZ/m$ with $m$ Square-free}


\subjclass[2020]{Primary 20J06, 55T25, 20K25, 20H15}

\author{Luis Jorge S\'anchez Salda\~na}
\address{Departamento de Matem\' aticas, Facultad de Ciencias, Universidad Nacional Aut\'onoma de M\'exico, M\'exico D.F., Mexico}
\email{luisjorge@ciencias.unam.mx}

\author{Mario Velásquez}
\address{Departamento de Matem\'aticas\\Facultad de Ciencias\\Universidad Nacional de Colombia, sede Bogot\'a\\Cra. 30 cll 45 - Ciudad Universitaria\\ Bogot\'a, Colombia}
\email{mavelasquezme@unal.edu.co}

\date{}
\dedicatory{The second author dedicates this paper to Lashmy.}

\keywords{Cohomology of groups, semidirect products, crystallographic groups, modules over group rings, modules with constant isotropy}

\begin{abstract}
We provide an explicit computation of the cohomology groups (with untwisted coefficients) of semidirect products of the form $\dbZ^n\rtimes \dbZ/m$ with $m$ square-free, by means of formulas that only depend on $n$, $m$ and the action of $\Z/m$ on $\Z^n$. We want to highlight the fact that we are not imposing any conditions on the $\Z/m$-action on $\Z^n$, and as far as we know, our formulas are the first in the literature in this generality. This generalizes previous computations of Davis-Lück and Adem-Ge-Pan-Petrosyan. In order to show that our formulas are usable, we develop a concrete example of the form $\dbZ^5\rtimes \dbZ/6$ where its cohomology groups are described in full detail.
\end{abstract}
\maketitle

\section{Introduction}

This paper provides a contribution to the problem of computing group cohomology of semidirect products of the form $\Z^n\rtimes G$ where $n\geq 1$, and $G$ is a finite group. We deal with the case where $G$ is a cyclic group of order a square-free positive integer $m$. 

Computations for general semidirect products $\Z^n\rtimes G$ can be difficult to obtain, mainly because the associated Lyndon-Hochschild-Serre spectral sequence does not collapse (see for example \cite{Totaro96}). Even  when $G$ is a finite cyclic group, there are examples where the Lyndon-Hochschild-Serre spectral sequence does not collapse; see, for example, \cite{Pan-Putricz} and \cite{LangerLuck}. On the other hand, provided $G$ is cyclic, there are some cases where the Lyndon-Hochschild-Serre spectral sequence collapses, for example, when $|G|$ is square-free (see \cite{AGPP}) or when the conjugation action of $G$ on $\Z^n$ is free outside of the origin (see \cite{LangerLuck}). In this paper we give a completely explicit computation of the group cohomology, with trivial coefficients, in the first case. In other words, we provide a computation of $H^l(\Z^n\rtimes \Z/m)$ in terms only of $l$, $m$, and the (complex eigenvalues of the) matrix determined by the action of a generator of $\Z/m$ on $\Z^n$.  In order to put our results in context it is worth saying that our computations generalize some results that already exist in the literature. For instance, Davis and Lück computed explicitly in \cite{DL13} the group cohomology of $\Z^n\rtimes \Z/p$, with $p$ prime, provided that the $\Z/p$-action on $\Z^n$ is free outside the origin. In Section \ref{un primo} we present a direct generalization of this computation dropping the hypothesis on the action. In \cite{AGPP} the authors work out an example of the form $\Z^3\rtimes \Z/4$.

Semidirect products $\Z^n\rtimes \Z/m$ have algebraic and geometric relevance. For instance, if the $\Z/m$ action of $\Z^n$ is faithful, then these groups are crystallographic. On the other hand,  $\Z/m$ acts on the torus $\mathbb{T}^n$,  then $\Z^n\rtimes\Z/m$ can also be identified with the orbifold fundamental group of $\mathbb{T}^n//(\Z/m)$ and then group cohomology of $\Z^n\rtimes\Z/m$ is related to the group of isomorphism classes of (flat) gerbes for these orbifolds; for details see \cite{AP}.

\begin{notation}\label{notation}
    Let $\{p_1,\dots ,p_\ell\}$ be a nonempty set of distinct prime numbers. Throughout the paper $G$ will denote the (finite cyclic) group $\Z/p_1 \times \cdots \times \Z/p_\ell$ of order $m=p_1\cdots p_\ell$. Given $p\in \{p_1,\dots ,p_\ell\}$, we denote by $G_p$ the quotient $G/(\Z/p)$.

    For the semidirect product $\Z^n\rtimes G$ the action of a fixed generator $g$ of $G$ on $\Z^n$ is given by an invertible $n\times n$ matrix with integer coefficients, which we will denote $\phi$. 
\end{notation}

For the rest of the introduction fix an integer $l\geq 0$. Our main goal is to compute the cohomology group $H^l(\Z^n\rtimes G)$. It is not difficult to see that these cohomology groups are finitely generated, and all torsion is $|G|$-torsion. Hence for  certain nonnegative integers $\theta(p_1),\dots, \theta(p_\ell)$ and $\omega$, that depend on $l$, we have an isomorphism

\[H^l(\Z^n\rtimes G)\cong \Z^\omega \oplus \Z/p_1^{\theta(p_1)}\oplus \cdots \oplus \Z/p_\ell^{\theta(p_\ell)}.\]

\vskip 10pt

From now on, fix $p\in \{p_1,\dots ,p_\ell\}$. Let us set some conventions and notation to state the computation of $\theta(p)$.

For the sake of readability, we state our main theorem under the assumption that $\Z^n$ splits, as a $\Z[G]$-module, as the following direct sum
\[\Z^r\oplus \Z[\Z/p]^s\oplus I^t\]
where $\Z^r$ is endowed with a trivial $\Z/p$ action, and $I^t$ is the augmentation ideal of $\Z[\Z/p]$. In this situation, we say that $\Z^n$ is an $(r,s,t)$-module. We also assume that $\phi$ splits as a block matrix $\phi_r\oplus \phi_s\oplus \phi_t$. In \cref{section:generaltorst} we explain why these assumptions are not restrictive in the sense that, in general, $\Z^n$ can be replaced by an $(r,s,t)$-module and $\phi$ always split.

In order to state the formula of the $p$-torsion, we set some preliminary notation.

\begin{itemize}    
    \item Define $\mathcal D$ as the set of positive divisors  $d$  of $m/p$ such that there is a primitive $\frac{m}{pd}$-th root of unity among the complex eigenvalues $\phi_t$ considered as a matrix with complex coefficients.
    
    \item For each $d\in \mathcal D$ define $\mathsf k_d$ as follows. Let $m_d$ is the number of eigenvalues of $\phi_t$ that are primitive $\frac{m}{pd}$-th roots of unity. Set $\mathsf k_d=m_d/(p-1)$. Note that $\mathsf k_d$ is a nonnegative integer due to \cite[Lemma 1.9(i)]{DL13}.

    \item Let $d\in \mathcal D$ and $i$ a nonnegative integer. Define $$\mathsf P_d^i=\sum_{j=0}^{\mathsf k_d} (-1)^j\binom{\mathsf k_d}{j}\binom{i-jp+\mathsf k_d-1}{\mathsf k_d-1}. $$
    Here we use the convention that $\binom{a}{b}=0$ whenever $a<0$ or $b<0$.
    
    \item Let $A\subseteq\mathcal D$, and $\beta$ a nonnegative integer. Denote by $\mcd(A)$ the maximum common divisor of the elements of $A$. Define $$\mathsf T(A,\beta)=\begin{cases}\left( \frac{p\mcd(A)}{m} \right)^{|A|} \sum_{\{(i_d)_{d\in \mathcal D}\mid i_d\geq 0, \sum i_d\leq \beta\}}\prod_{d\in A}( \mathsf P^{i_d}_d-1)& \text {if } A\neq\emptyset\\1& \text{ if } A=\emptyset\end{cases}$$

    \item Let $M$ be a $\Z[G]$-lattice. Choose a nonnegative integer $l$ and a positive divisor $d$ of $m$.
    Let us define an integer $\mathsf H(l,d,M)$ as follows. If $l>\rank_{\Z}(M)$, $\mathsf H(l,d,M)=0$. 
    
    Fix $l\leq \rank_{\Z}(M)$. Let $\mathbf{x}$ be the vector of all complex eigenvalues, including multiplicities, of a generator $g$ of $G$. Now let $\wedge^l \mathbf{x}$ be the vector of complex eigenvalues of the $l$-th exterior power of $g$, that is $\wedge^l \mathbf{x}$ is the vector whose coordinates are all products of collections of $l$ elements of coordinates of $\mathbf{x}$, in particular this vector has $\binom{rank_{\Z}(M)}{l}$ coordinates. By definition $\mathsf H(l,d,M)$ is the number of coordinates of $\wedge^l \mathbf{x}$ that are $\frac{m}{d}$-th roots of unity.
\end{itemize}

The following theorem is essentially \cref{main-thm} after applying the arguments presented in \cref{section:from:main:to:intro}.

\begin{theorem}\label{main-thm:intro}
For any $l$ the $p$-torsion part of $H^l(\Z^n\rtimes G)$ is isomorphic to
\begin{itemize}
\item For $l$ even: $(\Z/p)^\theta$,  where $$\theta=\sum_{\substack{l_1+l_2=l\\ l_2\text{ even}}}\sum_{A\subseteq\mathcal D}\left(\sum_{\tau=0}^s\mathsf T(A,l_2-p\tau)\binom{s}{\tau}\right)\cdot \mathsf H(l_1, \mcd(A),\Z^r)$$
    \item For $l$ odd:
    $(\Z/p)^\theta$,  where $$\theta=\sum_{\substack{l_1+l_2=l\\ l_2\text{ odd}}}\sum_{A\subseteq\mathcal D}\left(\sum_{\tau=0}^s\mathsf T(A,l_2-p\tau)\binom{s}{\tau}\right)\cdot \mathsf H(l_1, \mcd(A),\Z^r)$$
\end{itemize}
\end{theorem}

Now we state the computation of the rank of $H^l(\Z^n\rtimes G)$. It is worth saying that the computation of the rank is not difficult, and it is done in \cref{section:rank}.

\begin{theorem}\label{intro:rank}
    For each $l\geq 0$, we have
    \[\rank_{\Z}(H^l(\Z^n\rtimes G))= \mathsf H(l, m, \Z^n). \]
\end{theorem}

\subsection{Outline of the paper}

In \cref{section:useful:results} we recall some results of Davis-Lück and Langer-Lück about the computation in the case where the action of $G$ on $\Z^n$ is free outside the origin. Our computations rely strongly on this particular case. The main result of \cref{section:reduction:to:freeoutorigin} is \cref{cor:torsion:st}, in this proposition essentially we state that the computation of the $p$-torsion of $H^l(\Z^n\rtimes G)$ can be roughly reduced to the computation of the $p$-torsion of $H^*(I^t\rtimes\Z/p)$. Since $\Z/p$ acts freely outside the origin on $I$, the results of the previous section become useful. Although the torsion subgroup of $H^\beta(I^t\rtimes\Z/p)$ was already computed by Davis-Lück, we re-compute this torsion subgroup together with its structure of $\Z[G_p]$-module since this is crucial for our purposes; this is done in the main result of \cref{section:ptorsion:It}, namely \cref{permutation-module}. \cref{section:ptorsion}  is a one-theorem section. In \cref{main-thm} we provide a computation for the $p$-torsion of $H^l(\Z^n\rtimes G)$ in terms of the results obtained in Sections 3 and 4. In \cref{section:from:main:to:intro} we explain how to state theorem \cref{main-thm} in terms of the constants defined in this introduction, so that we recover \cref{main-thm:intro}. \cref{section:rank} is a short section devoted to prove \cref{intro:rank}. In \cref{section:generaltorst} we explain how to replace $\Z^n$ with a $\Z[G]$-module of type $(r,s,t)$. Also in this section, we explain how to compute algorithmically $r$, $t$, $\phi_r$ and $\phi_t$ out of the $\Z[G]$-module structure of $\Z^n$. Finally, in \cref{un primo} we explain how \cref{main-thm:intro} and \cref{intro:rank} reduce in the case $G$ is a cyclic group of prime order.

We also have \cref{section:rst}, where we prove some splitting results for $\Z[G]$-lattices that might be well known for the experts on representation theory of finite cyclic groups, but we were not able to find them in the literature.

\subsection*{Acknowledgements.}
We are grateful for the financial support of DGAPA-UNAM grant PAPIIT IA106923. L.J.S.S thanks the Universidad Nacional de Colombia, sede Bogot\'a, for its hospitality during a visit in an early stage of this project. The authors thank Omar Antol\'in-Camarena for pointing out an error in an early stage of this paper.

\section{Some useful results}\label{section:useful:results}

Let $H$ be a finite group. Whenever $H$ acts freely on $\Z^n-\{0\}$, we say $H$ acts \emph{free outside the origin} on $\Z^n$. For $H=\Z/q$, with $q$ prime,  acting freely outside the origin on $\Z^n$, there is an explicit computation of the group cohomology, with nontwisted integral coefficients, of $\Z^n\rtimes\Z/q$ due to Davis and L\"uck in \cite{DL13}. The following result will be useful for our purposes. Let us fix a generator $\xi\in \Z/q$ and denote by $\rho:\Z^n\to\Z^n $ the action of $\xi$.

\begin{lemma}\label{DL1} Let $\mathcal{P}$ be the set of conjugacy classes of finite maximal subgroups of $\Z^n\rtimes\Z/q$. Suppose $\Z/q$ acts freely outside the origin on $\Z^n$. We have the following
\begin{enumerate}
\item We have an isomorphism of $\Z[\Z/q]$-modules, 
    $$\Z^n \cong I_1\oplus\cdots \oplus I_k,$$ where the $I_j$ are non-zero ideals of $\Z[\Z/q]/(N)$, and $N$ is the norm element. The number $k$ satisfies $n=(q-1)k.$ 
    \item There is an isomorphism $H^1(\Z/q;\Z^n)\cong (\Z/q)^k$ and the latter is in bijective correspondence with $\mathcal{P}$. The bijection sends an element $\bar{x}$ of $\Z^n/(\rho-id)\Z^n$ to the subgroup generated by $(\bar{x},\xi)$. 
    \item The map induced by the restrictions to the various subgroups $$H^\beta(\Z^n\rtimes\Z/p)\to H^\beta(\Z^n)^{\Z/q}\oplus\bigoplus_{(P)\in\mathcal{P}} H^\beta(P)$$ is injective and is bijective if $\beta>n$.
    \end{enumerate}
\end{lemma}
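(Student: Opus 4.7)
The plan is to prove the three parts in order, with the first two being essentially algebraic and the third requiring a geometric or spectral-sequence argument.

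For part (1), the key observation is that the free-outside-origin hypothesis forces $(\Z^n)^{\Z/q} = 0$, which in turn forces the norm element $N = 1 + \rho + \cdots + \rho^{q-1}$ to act as zero on $\Z^n$, since its image lies in the invariants. Hence $\Z^n$ becomes a module over the quotient ring $\Z[\Z/q]/(N) \cong \Z[\zeta_q]$, the ring of integers of $\dbQ(\zeta_q)$. Because $q$ is prime, this ring is a Dedekind domain, and the standard structure theorem for finitely generated torsion-free modules over a Dedekind domain yields a decomposition $\Z^n \cong I_1 \oplus \cdots \oplus I_k$ into nonzero ideals. The rank count $n = k(q-1)$ follows from $\mathrm{rank}_\Z I_j = \varphi(q) = q-1$.

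For part (2), the computation of $H^1(\Z/q; \Z^n)$ uses the periodic resolution of $\Z/q$. Because $N$ acts as zero, $H^1(\Z/q; \Z^n) = \Z^n/(\rho - 1)\Z^n$; on each summand $I_j$ the element $\rho - 1$ becomes $\zeta_q - 1$, which generates the unique prime of $\Z[\zeta_q]$ above $q$, with residue field $\Z/q$. Summing across $j$ yields the claimed $(\Z/q)^k$. For the bijection with $\mathcal{P}$, torsion-freeness of $\Z^n$ implies that every finite subgroup of $\Gamma := \Z^n \rtimes \Z/q$ injects into $\Z/q$, and the maximal finite subgroups are exactly the complements of $\Z^n$. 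Splittings of $1 \to \Z^n \to \Gamma \to \Z/q \to 1$ biject with $Z^1(\Z/q; \Z^n) = \Z^n$, via $x \mapsto (\xi \mapsto (x, \xi))$, and two splittings are $\Z^n$-conjugate iff their cocycles differ by a coboundary. Post-composing with the assignment ``splitting $\mapsto$ image subgroup'' yields the stated bijection $\bar x \mapsto \langle (\bar x, \xi) \rangle$.

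For part (3), the geometric heart of the lemma, I would realize $\Gamma$ as a proper action on $\dbR^n$ with $\Z^n$ acting by translations and $\Z/q$ by the given representation. Since $(\dbR^n)^{\Z/q} = 0$, the operator $\rho - 1$ is invertible on $\dbR^n$, so every nontrivial element of $\Gamma$ has a unique fixed point, and the $\Gamma$-orbits of these fixed points biject with $\mathcal{P}$. I would then excise small equivariant open neighborhoods of the fixed-point set and apply equivariant Mayer--Vietoris, or equivalently work with a cofibration whose cofiber encodes the isolated fixed strata. The resulting long exact sequence splits $H^\beta(\Gamma)$ into a piece detected by restriction to the free-action quotient (yielding $H^\beta(\Z^n)^{\Z/q}$ by Cartan--Leray) and a piece detected by the local contributions $\bigoplus_{(P)\in\mathcal{P}} H^\beta(P)$. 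The main obstacle is verifying that the map produced by this geometric decomposition really coincides with the restriction map in the statement; once that is done, injectivity follows from the direct sum structure, and for $\beta > n$ the first factor vanishes since $H^\beta(\Z^n) = 0$, forcing the remaining map to be an isomorphism.
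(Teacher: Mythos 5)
The paper does not reprove this lemma at all: it is quoted verbatim from Davis--L\"uck, with (1) and (2) being Lemma 1.9 of \cite{DL13} and (3) being their Lemma 1.7(ii),(iii) and Remark 1.8. So you are attempting to reprove the cited results from scratch. Your arguments for (1) and (2) are essentially sound and follow the standard route: freeness outside the origin kills the invariants, hence the norm acts as zero, $\Z^n$ becomes a torsion-free module over the Dedekind domain $\Z[\zeta_q]$, and $H^1(\Z/q;\Z^n)=\Z^n/(\rho-1)\Z^n\cong(\Z/q)^k$. One point you should not skip in (2): the bijection is with conjugacy classes under all of $\Gamma=\Z^n\rtimes\Z/q$, not only under $\Z^n$, so you must check that conjugating $(x,\xi)$ by a general element $(a,\xi^j)$, which gives $(\rho^jx+(1-\rho)a,\xi)$, does not change the class of $x$ in $\Z^n/(\rho-1)\Z^n$; this works because $\rho^jx\equiv x$ modulo $(\rho-1)\Z^n$, but it needs to be said, as does the fact that maximal finite subgroups are exactly the order-$q$ complements.

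Part (3) is where your proposal has a genuine gap. In the Mayer--Vietoris (or pair) sequence obtained by excising neighborhoods of the singular set $\Sigma\subset\dbR^n$, the kernel of $H^\beta_\Gamma(\dbR^n)\to H^\beta_\Gamma(U)\oplus H^\beta_\Gamma(V)$ is the image of the connecting map from $H^{\beta-1}_\Gamma(U\cap V)\cong\bigoplus_{(P)}H^{\beta-1}(S^{n-1}/P)$, so injectivity emphatically does \emph{not} ``follow from the direct sum structure''; controlling that connecting map is precisely the hard content, and in \cite{DL13} it is obtained from a finer analysis (collapse of the Lyndon--Hochschild--Serre spectral sequence and a detection argument showing the torsion $\bigoplus_{i>0}H^i(\Z/q;H^{\beta-i}(\Z^n))$ restricts injectively to the maximal finite subgroups). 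Moreover, the ``free'' factor your sequence produces is $H^\beta_\Gamma(\dbR^n\setminus\Sigma)\cong H^\beta((\dbR^n\setminus\Sigma)/\Gamma)$, which is not $H^\beta(\Z^n)^{\Z/q}$: the map in the statement is restriction to the subgroup $\Z^n$, i.e.\ $H^\beta_\Gamma(\dbR^n)\to H^\beta_{\Z^n}(\dbR^n)$, which is not the Mayer--Vietoris map, so the comparison you defer as ``the main obstacle'' is not a routine compatibility check but requires a separate identification. Finally, your reason for bijectivity when $\beta>n$ (``the first factor vanishes since $H^\beta(\Z^n)=0$'') conflates these two different first factors; the relevant vanishing statements are cohomological-dimension bounds for the open $n$-manifold $(\dbR^n\setminus\Sigma)/\Gamma$ and for the sphere quotients $S^{n-1}/P$ in degrees $\beta-1,\beta>n-1$, again combined with the unproved identification of the maps. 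As written, part (3) is a plausible plan but not a proof; either carry out the detection argument of Davis--L\"uck or simply cite it, as the paper does.
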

\begin{proof}
\begin{enumerate}
    \item Lemma 1.9 (i) in \cite{DL13}.
    \item Lemma 1.9 (iii) in \cite{DL13}.
    \item Lemma 1.7 (ii), (iii) and Remark 1.8 in \cite{DL13}.
    \end{enumerate}
\end{proof}

We also need the following result from \cite{LL12}. 

\begin{lemma}\cite[Ex. 7.7]{LL12}
\label{LL1}
Let $G$ and $p$ be as in \cref{notation}.
 Assume $M$ is a finitely generated free abelian group endowed with an action of $G$ free outside the origin, let $C\subseteq G$ be a subgroup, and let $\mathcal{M}(C)$ be the set of conjugacy classes of finite maximal subgroups in $M\rtimes G$ isomorphic to $C$.  Then
$$|\mathcal{M}(C)|=\begin{cases}
    1& \text{ if } C=G;\\
    0& \text{ if } C\neq G \text{ or $C$ is not a $p$-Sylow subgroup};\\
    \frac{p(p^{\rank(M)/(p-1)}-1)}{m}& \text{ if } C \text{ is a $p$-Sylow subgroup}.
\end{cases}$$
\end{lemma}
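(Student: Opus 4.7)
The plan is to enumerate finite subgroups of $\Gamma=M\rtimes G$ through their projections to $G$, combining the cohomological parametrization of lifts with the structure of $G$ as a product of cyclic groups of distinct prime orders. Since $M$ is torsion-free every finite $F\leq\Gamma$ is isomorphic to a subgroup $H=\prod_{i\in S}\dbZ/p_i\leq G$ for a unique $S\subseteq\{1,\dots,\ell\}$, and the $\Gamma$-conjugacy classes of lifts of $H$ are parametrized by $H^1(H;M)$ modulo the residual action of $G/H$ (well-defined since $G$ is abelian and inner conjugations act trivially on cohomology).

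I would first establish the key freeness statement: for each prime $p\mid m$, the quotient $G_p=G/(\dbZ/p)$ acts freely on $H^1(\dbZ/p;M)\setminus\{0\}\cong(\dbZ/p)^k\setminus\{0\}$, where $k=\rank(M)/(p-1)$ by \cref{DL1}. Reducing to a $\tau\in G_p$ of prime order $q\neq p$, the $\langle\tau\rangle$-cohomology long exact sequence of $0\to(\rho-1)M\to M\to M/(\rho-1)M\to 0$ (with $\rho$ a generator of $\dbZ/p$), together with $M^{\langle\tau\rangle}=0$ and the $\langle\tau\rangle$-equivariant isomorphism $M\xrightarrow{\rho-1}(\rho-1)M$, embeds $(M/(\rho-1)M)^{\langle\tau\rangle}$ into $H^1(\langle\tau\rangle;M)$; a second application of \cref{DL1} to $\langle\tau\rangle\cong\dbZ/q$ identifies the target with $(\dbZ/q)^{\rank(M)/(q-1)}$. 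Being simultaneously $p$-torsion and $q$-torsion, the fixed set vanishes.

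Feeding this freeness into the Lyndon--Hochschild--Serre spectral sequence for $1\to\dbZ/p\to H\to H/(\dbZ/p)\to 1$ (for any prime $p\in S$) yields $H^1(H;M)=0$ whenever $|S|\geq 2$: the row $t=0$ vanishes since $M^{\dbZ/p}=0$, the $s,t\geq 1$ block vanishes because $H^t(\dbZ/p;M)$ is $p$-torsion while $|H/(\dbZ/p)|$ is coprime to $p$, and $E_2^{0,1}=H^1(\dbZ/p;M)^{H/(\dbZ/p)}$ vanishes by the freeness just proved. The remaining count is bookkeeping: associating to a cocycle $\phi$ its common fixed point $v\in M\otimes\dbQ$ determined by $(1-\rho_i)v=\phi(\rho_i)$ for $\rho_i$ generating $\dbZ/p_i\leq H$, a lift of $H$ is maximal in $\Gamma$ precisely when the $G$-stabilizer $G_v$ equals $H$. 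For $H$ involving at least two primes (including $H=G$), the vanishing forces $v\in M$, so the only lift up to $\Gamma$-conjugacy is $\{0\}\rtimes H$, which is maximal only when $H=G$ and gives the single class in $\mathcal{M}(G)$. For a Sylow $H=\dbZ/p$, each of the $p^k-1$ non-trivial cocycle classes has stabilizer exactly $\dbZ/p$ (apply the vanishing to every $H'\supsetneq\dbZ/p$), and the free action of $G_p$ partitions them into $(p^k-1)/|G_p|=p(p^k-1)/m$ orbits, while the trivial lift lies inside $\{0\}\rtimes G$ and is discarded. All other $C$ produce only the non-maximal trivial lift. The main obstacle is the freeness claim in the first step; once secured, the rest reduces to standard spectral-sequence formalism and orbit counting.
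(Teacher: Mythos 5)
Your argument is correct, but there is nothing in the paper to compare it with line by line: the paper does not prove \cref{LL1} at all, it simply quotes \cite[Ex.~7.7]{LL12}. What you have written is therefore an independent, essentially self-contained derivation, and it is sound. Your route is the standard crystallographic one: finite subgroups of $M\rtimes G$ project isomorphically into $G$, and lifts of $H\leq G$ are classified up to conjugacy by $H^1(H;M)$ modulo the residual $G/H$-action; the decisive step is your coprime-torsion argument, embedding $\bigl(H^1(\Z/p;M)\bigr)^{\langle\tau\rangle}$ into the $q$-torsion group $H^1(\langle\tau\rangle;M)$ via the long exact sequence of $0\to(\rho-1)M\to M\to M/(\rho-1)M\to 0$, which gives freeness of the $G_p$-action on the nonzero classes, kills $H^1(H;M)$ for $H$ of composite order, and reduces the statement to orbit counting plus the maximality criterion via the unique fixed point in $M\otimes\dbQ$. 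It is worth noting that this freeness is precisely the ingredient the paper later extracts \emph{from} \cref{LL1} in the proof of \Cref{lemma: P_j:orbits} (that every element of $\mathcal{P}_H-\{0\}_H$ has isotropy exactly $H$), so your argument would in fact make that portion of the paper independent of \cite{LL12}, and it is not circular since you only invoke \cref{DL1}. Two minor points to tighten: spell out that an arbitrary nontrivial element of $G_p$ has its fixed set contained in that of a prime-order power (and that you act through the order-$q$ lift of $\tau$ to $G$); and observe that your bookkeeping, which discards the trivial class in the Sylow count, matches the intended reading of the lemma only when the $p$-Sylow is a proper subgroup of $G$ --- in the degenerate case $m=p$ the lemma's cases overlap ($C=G$ is the Sylow) and the correct total is $p^{k}$, obtained by adding the two contributions, which is an ambiguity of the statement rather than a gap in your proof.
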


\section{Reducing the computation of the $p$-torsion of $H^*(\Z^n\rtimes G)$ to actions free outside the origin}\label{section:reduction:to:freeoutorigin}

In this section we consider $G$, $p$, and $G_p$ as in \cref{notation}. \\

Denote $$H^*(-)_{(p)}=H^*(-)\otimes_\Z \Z_{(p)}.$$ Note that $H^*(-)_{(p)}$ is a cohomology theory. 

\begin{remark}
    Tensoring with $\Z_{(p)}$ kills all torsion that is not $p$-torsion. Of particular interest for us is the fact that the $p$-torsion subgroups of $H^*(\Z^n\rtimes G)_{(p)}$ and of $H^*(\Z^n\rtimes G)$ are isomorphic. The main goal of this section is to compute the $p$-torsion of the former cohomology group.
\end{remark}

\begin{lemma}[Reducing to one prime]\label{lemma:reducing:to:invariants}
    For all  $l\geq0$
 \begin{equation}\label{LHS-collapse}
H^l(\Z^n\rtimes G)_{(p)}\cong \left(H^l(\Z^n\rtimes\Z/p)_{(p)}\right)^{G_p},
 \end{equation}
 where the $G_p$ action on $H^l(\Z^n\rtimes\Z/p)_{(p)}$ is the one induced by conjugation of $G_p$ on $\Z^n\rtimes\Z/p$.
\end{lemma}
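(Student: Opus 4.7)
The natural approach is to apply the Lyndon-Hochschild-Serre spectral sequence to an appropriate short exact sequence, and then exploit the fact that $|G_p|$ is invertible in $\Z_{(p)}$.

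First I would observe that, since $G=\Z/p_1\times\cdots\times\Z/p_\ell$ is a direct product of cyclic groups of distinct prime orders, we have $G\cong \Z/p\times G_p$, and consequently the actions of $\Z/p$ and $G_p$ on $\Z^n$ commute. This yields a short exact sequence
\[
1 \longrightarrow \Z^n\rtimes\Z/p \longrightarrow \Z^n\rtimes G \longrightarrow G_p \longrightarrow 1,
\]
with $\Z^n\rtimes\Z/p$ included in the obvious way and the conjugation action of $G_p$ on $\Z^n\rtimes\Z/p$ being exactly the one described in the statement (since $\Z/p$ and $G_p$ commute inside $G$, conjugation by a lift of an element of $G_p$ fixes the $\Z/p$-factor pointwise and acts by the given $G_p$-action on the $\Z^n$-factor).

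Next I would write down the associated LHS spectral sequence
\[
E_2^{i,j}=H^i\!\bigl(G_p;\, H^j(\Z^n\rtimes\Z/p)\bigr)\;\Longrightarrow\; H^{i+j}(\Z^n\rtimes G),
\]
and then tensor everything with $\Z_{(p)}$. Since $\Z_{(p)}$ is $\Z$-flat, this is the same as the LHS spectral sequence computed with $\Z_{(p)}$-coefficients (the tensor product commutes with the cohomology of the fiber, and with taking cohomology of $G_p$ with values in a module). The key observation is that $|G_p|$ is coprime to $p$, hence invertible in $\Z_{(p)}$; thus for any $\Z_{(p)}[G_p]$-module $M$, the higher cohomology $H^i(G_p;M)$ vanishes for $i>0$ and $H^0(G_p;M)=M^{G_p}$. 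Consequently, after localization, the $E_2$ page is concentrated in the column $i=0$, so the spectral sequence collapses and yields
\[
H^l(\Z^n\rtimes G)_{(p)} \;\cong\; E_\infty^{0,l} \;=\; \bigl(H^l(\Z^n\rtimes\Z/p)_{(p)}\bigr)^{G_p}.
\]

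There is no real obstacle here beyond checking that the short exact sequence is well-defined and that the induced $G_p$-action on the fiber cohomology agrees with the one stated; both are straightforward from the direct product decomposition $G=\Z/p\times G_p$. The only subtlety worth noting is the compatibility of $\Z_{(p)}$-localization with the spectral sequence, which follows from flatness of $\Z_{(p)}$ over $\Z$.
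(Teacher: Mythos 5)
Your proof is correct and takes essentially the same route as the paper: both apply the Lyndon--Hochschild--Serre spectral sequence to the extension $1\to\Z^n\rtimes\Z/p\to\Z^n\rtimes G\to G_p\to 1$ and use that $|G_p|$ is invertible in $\Z_{(p)}$ (equivalently, that $H^{i}(G_p;-)$ has exponent $|G_p|$ for $i>0$) to concentrate the $E_2$-page in the column $i=0$, yielding the invariants $\left(H^l(\Z^n\rtimes\Z/p)_{(p)}\right)^{G_p}$. The only cosmetic difference is that the paper runs the spectral sequence directly for the localized theory $H^*(-)_{(p)}$, whereas you tensor the integral spectral sequence with $\Z_{(p)}$ and invoke flatness; these are interchangeable.
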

\begin{proof}
Consider the group extension
\begin{equation}\label{ext}
    0\to\Z^n\rtimes\Z/p\to \Z^n\rtimes G\to G_p\to 0.
\end{equation}

From the Lyndon-Hochschild-Serre spectral sequence for $H^*(-)_{(p)}$ associated to the extension (\ref{ext}), we obtain the following 
\begin{equation*}
E_2^{j,l}=H^j(G_p;H^l(\Z^n\rtimes\Z/p)_{(p)})=\begin{cases}\left(H^l(\Z^n\rtimes\Z/p)_{(p)}\right)^{G_p} & j=0\\
 0&j\neq0.\end{cases}
\end{equation*}
The second case in the above equation holds since, for any  $G_p$-module $M$, the group $H^l(G_p;M)$ has exponent $|G_p|$ for every $l>0$, in particular, it has no $p$-torsion.
Now the proof follows.
\end{proof}

\vskip 10pt
From \Cref{lemma:reducing:to:invariants}, it is clear that our next task is to compute $\left(H^l(\Z^n\rtimes\Z/p)_{(p)}\right)^{G_p}$ for $l\geq 0$. The cohomology of groups of the form $\Z^n\rtimes\Z/p$ has been widely studied in \cite{AGPP} and \cite{DL13}, and we will borrow some ideas from these references.
In particular, we will start to make use of the theory of modules of type $(r,s,t)$ from \cref{section:rst}.

\vskip 10pt

We say a finitely generated $\Z[G]$-module $M$ is a \emph{$\Z[G]$-lattice} provided $M$ is free as a $\Z$-module.

\begin{assumption}  
From now on we will suppose that $\Z^n$, considered as a $\Z[\Z/p]$-lattice, is endowed with an  $(r,s,t)$-decomposition that also is $G$-equivariant. This assumption is not restrictive due to \cref{lemma:rst} and \cref{prop:virtual:rst:decompositions}.
\end{assumption}

\vskip 10pt

Eventually, in our calculation of $\left(H^l(\Z^n\rtimes\Z/p)_{(p)}\right)^{G_p}$ we will use the Lyndon-Hochshild-Serre spectral sequence associated to $\Z^n\rtimes\Z/p$, thus it will be crucial to understand the $\Z/p$-action, and actually the $G$-action, on the cohomology groups of $\Z^n$.

\begin{lemma}\label{lemma:kunneth} Let $M$ be a $\Z[G]$-module such that $M$, considered as a $\Z[\Z/p]$-module, is of type $(r,s,t)$ with associated $G$-equivariant decomposition $\Z^r\oplus \Z[\Z/p]^s\oplus I^t$. Then the action of $\Z/p$ on $I^t$ is free outside of the origin and, for all $\omega\geq0$, we have the following  isomorphism of $\Z[G]$-modules
\begin{align}\label{kunneth}
H^\omega(M)&\cong\bigoplus_{\alpha+\delta+\gamma=\omega} H^\alpha(\Z^r)\otimes_{\Z}H^\delta(\Z[\Z/p]^s)\otimes_{\Z}H^\gamma(I^t).
\end{align}

Moreover

\[H^\delta(\Z[\Z/p]^s)\otimes_{\Z}H^\gamma(I^t)\cong \begin{cases} H^\gamma(I^t)^{\binom{s}{\tau}}\oplus \Ind_{0}^{\Z/p}(H^\gamma(I^t))^{s_\delta} &\text{if }\delta=p\tau, \tau\leq s\\ \Ind_{0}^{\Z/p}(H^\gamma(I^t))^{s_\delta}&\text{if }\delta\neq p\tau, \tau\leq s   
\end{cases}
\]
Where $s_\delta$ is the rank of the free factor of $H^\delta(\Z[\Z/p]^s)$ as $\Z[\Z/p]$-module, that is $$s_\delta=\frac{1}{p}\sum_{(i_1,\ldots ,i_s)\in I_\delta}\prod_{k=1}^s\binom{p}{i_k},$$ with $I_\delta=\{(i_1,\ldots ,i_s)\mid i_1+\cdots+i_s=\delta, i_k\notin\{0,p\} \text{ for some $k=1,\ldots,s$} \}$.
\end{lemma}
\begin{proof}
 The Künneth formula leads to the isomorphism (\ref{kunneth}). 
Note that $\Z/p$ acts free outside the origin on the augmentation ideal $I$ and so does on $I^t$.
On the other hand, in \cite[p. 382]{DL13} it is proved that $H^*(\Z[\Z/p])$ is free as a $\Z/p$-module for $1\leq * \leq p-1$ and isomorphic to $\Z$ for $*=0, p$. Then $$H^\delta(\Z[\Z/p]^s)\cong \begin{cases}\Z^{\binom{s}{\tau}}\oplus \Z[\Z/p]^{s_\delta} &\delta=p\tau, \tau\leq s\\\Z[\Z/p]^{s_\delta}& \delta\neq p\tau, \tau\leq s\end{cases}$$ Now, the moreover part follows from the previous observations and \cite[Corollary 5.7]{BrownBook}.
\end{proof}

The following proposition is a direct consequence of the above lemma and the Shapiro lemma.

\begin{proposition}\label{prop:5.6} We have, for all $\beta\geq 0$ and $\omega \geq 0$, the following isomorphisms of $\Z[G]$-modules

    \[H^\beta(\Z/p;H^\omega(\Z^n))\cong \bigoplus_{\alpha +\delta +\gamma=\omega} H^\alpha(\Z^r) \otimes H^\beta (\Z/p;H^\delta(\Z[\Z/p]^s)\otimes_{\Z}H^\gamma(I^t)).\]

and

$$H^\beta(\Z/p; H^\delta(\Z[\Z/p]^s)\otimes_\Z H^\gamma(I^t))\cong \begin{cases}
    H^\beta(\Z/p;H^\gamma(I^t)^{\binom{s}{\tau}})\oplus H^\delta(I^t)^{s_\delta}&\text{ if } \delta=p\tau, \tau\leq s\\
    H^\delta(I^t)^{s_\delta}&\text{ if }\delta\neq p\tau, \tau\leq s
\end{cases}$$

\end{proposition}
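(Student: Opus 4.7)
The plan is to combine Lemma \ref{lemma:kunneth} with Shapiro's lemma. For the first isomorphism, I apply the functor $H^\beta(\Z/p;-)$ to the Künneth decomposition \eqref{kunneth}. Since the $\Z/p$-action on $\Z^r$ is trivial, so is the induced action on $H^\alpha(\Z^r)$; moreover $H^\alpha(\Z^r)$ is a finitely generated free abelian group, being the cohomology of a torus. Hence $H^\alpha(\Z^r)$ is a trivial $\Z$-flat $\Z[\Z/p]$-module, and for any $\Z[\Z/p]$-module $X$ one has a natural isomorphism
\[
H^\beta(\Z/p;\, H^\alpha(\Z^r)\otimes_\Z X)\;\cong\; H^\alpha(\Z^r)\otimes_\Z H^\beta(\Z/p;X),
\]
which can be verified by applying $\Hom_{\Z[\Z/p]}(-, H^\alpha(\Z^r)\otimes_\Z X)$ to a free resolution of $\Z$ over $\Z[\Z/p]$ and using flatness to pull $H^\alpha(\Z^r)$ out of the (co)kernels. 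Summing over $\alpha+\delta+\gamma=\omega$ yields the first stated isomorphism.

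For the second isomorphism, I apply $H^\beta(\Z/p;-)$ to the two-case decomposition supplied by the ``moreover'' part of Lemma \ref{lemma:kunneth}. Since cohomology commutes with finite direct sums, in the case $\delta=p\tau$ with $\tau\leq s$ I obtain
\[
H^\beta\bigl(\Z/p;\, H^\gamma(I^t)^{\binom{s}{\tau}}\bigr)\;\oplus\; H^\beta\bigl(\Z/p;\,\Ind_0^{\Z/p}(H^\gamma(I^t))^{s_\delta}\bigr),
\]
and only the second summand when $\delta\neq p\tau$. Shapiro's lemma identifies
\[
H^\beta\bigl(\Z/p;\,\Ind_0^{\Z/p}(H^\gamma(I^t))\bigr)\;\cong\; H^\beta(\{1\};\,H^\gamma(I^t)),
\]
which vanishes for $\beta>0$ and equals $H^\gamma(I^t)$ for $\beta=0$. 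Reassembling the summands reproduces the displayed formula.

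The only genuine work is in tracking the $\Z[G]$-module structure through each identification. Since Proposition \ref{prop:virtual:rst:decompositions} furnishes the $(r,s,t)$-decomposition as $G$-invariant, and since the distinguished $\Z/p$ is a direct factor of $G$ (so $G_p$ commutes with it), the residual $G_p$-action on $H^*(\Z/p;-)$ is induced functorially by naturality. All ingredients invoked above — the Künneth isomorphism, extraction of a trivial $\Z$-flat tensor factor, and Shapiro's lemma — are natural with respect to $\Z[G]$-module maps, so each stated isomorphism is automatically an isomorphism of $\Z[G]$-modules. I do not expect any serious obstacle beyond this naturality bookkeeping.
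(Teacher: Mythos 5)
Your proof is correct and follows essentially the same route as the paper, which disposes of the proposition in one line as a direct consequence of \cref{lemma:kunneth} and Shapiro's lemma: pulling the finitely generated free trivial $\Z[\Z/p]$-module $H^\alpha(\Z^r)$ out of $H^\beta(\Z/p;-)$ and applying Shapiro to the induced summands is exactly the intended argument. Note only that Shapiro produces $H^\gamma(I^t)^{s_\delta}$ concentrated in degree $\beta=0$, so your derivation in fact fixes the index slip ($H^\delta$ in place of $H^\gamma$) in the displayed statement.
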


\begin{proposition}\label{cor:torsion:st}
The torsion subgroup of $H^l((\Z[\Z/p]^s\oplus I^t)\rtimes\Z/p)$ is isomorphic as $\Z[G]$-modules to the torsion subgroup of
\begin{align}\label{torsion}
\bigoplus_{\tau=0}^sH^{l-p\tau}(I^t\rtimes\Z/p)^{\binom{s}{\tau}}
\end{align}    
\end{proposition}
\begin{proof} As a direct application of the Lyndon-Hochshild-Serre spectral sequence, the fact that it collapses for the extension $(\Z[\Z/p]^s\oplus I^t)\rtimes\Z/p$ \cite[Theorem 1.1]{AGPP}, and the Künneth formula we have the following isomorphisms

\begin{align*}
    H^\beta ((\Z[\Z/p]^s\oplus I^t)\rtimes\Z/p) &\cong \bigoplus_{\alpha + \theta = \beta} H^\alpha (\Z/p; H^\theta (\Z[\Z/p]^s\oplus I^t))\\
    &\cong \bigoplus_{\alpha + \theta = \beta } H^\alpha (\Z/p ; \bigoplus_{\delta + \gamma=\theta} H^\delta(\Z[\Z/p]^s)\otimes H^\gamma(I^t))\\
    &\cong \bigoplus_{\alpha +\theta =\beta }\quad\bigoplus_{\delta +\gamma=\theta} H^\alpha (\Z/p ;  H^\delta(\Z[\Z/p]^s)\otimes H^\gamma(I^t))
\end{align*}

Recall that the cohomology of $\Z/p$ is all torsion except possibly in degree 0. Hence we get, from the above isomorphisms, that the torsion subgroup of $H^\beta ((\Z[\Z/p]^s\oplus I^t)\rtimes\Z/p)$ is isomorphic to

\begin{align*}          \bigoplus_{\substack{\alpha +\theta =\beta\\ \alpha>0}} \quad\bigoplus_{\delta +\gamma=\theta} H^\alpha (\Z/p ;  H^\delta(\Z[\Z/p]^s)\otimes H^\gamma(I^t)) 
\end{align*}
which, by \cref{prop:5.6} is isomorphic to 
\begin{align*}
\bigoplus_{\substack{\alpha +\theta =\beta\\ \alpha>0}} \quad\bigoplus_{\substack{p\tau+\gamma=\theta\\ \tau\leq s}} H^\alpha (\Z/p;H^\gamma(I^t)^{\binom{s}{\tau}}) &\cong  \bigoplus_{\substack{\alpha +p\tau +\gamma  =\beta\\ \alpha>0,\tau\leq s}} H^\alpha (\Z/p;H^\gamma(I^t)^{\binom{s}{\tau}}) \\
&\cong \bigoplus_{\tau=0}^s \bigoplus_{\alpha +p\tau +\gamma =\beta } \mathrm{torsion}\left( H^\alpha (\Z/p;H^\gamma(I^t)^{\binom{s}{\tau}}) \right)
\end{align*}

As a conclusion we have

\begin{equation}\label{eq:torsion1}\mathrm{torsion}\left( H^\beta ((\Z[\Z/p]^s\oplus I^t)\rtimes\Z/p) \right) \cong \bigoplus_{\tau=0}^s \bigoplus_{\alpha +p\tau +\gamma =\beta } \mathrm{torsion}\left( H^\alpha (\Z/p;H^\gamma(I^t)^{\binom{s}{\tau}}) \right)
\end{equation}

On the other hand we have the following isomorphisms, where the second one is a direct application of the Lyndon-Hochschild-Serre spectral sequence and the fact that it collapses for the extension $ I^t\rtimes\Z/p$ \cite[Theorem 1.1]{AGPP}

\begin{align*}
    \mathrm{torsion}\left( \bigoplus_{\tau=0}^s H^{\beta -p\tau} (I^t\rtimes \Z/p)^{\binom{s}{\tau}}  \right) &\cong \bigoplus_{\tau=0}^s \mathrm{torsion}\left(  H^{\beta -p\tau} (I^t\rtimes \Z/p)^{\binom{s}{\tau}}  \right)\\
    & \cong  \bigoplus_{\tau=0}^s \mathrm{torsion}\left(\left( \bigoplus_{\alpha' +\theta'=\beta -p\tau} H^{\alpha'} (\Z/p; H^{\theta'}(I^t) )\right)^{\binom{s}{\tau}}  \right)\\
    &\cong  \bigoplus_{\tau=0}^s \mathrm{torsion}\left(\left( \bigoplus_{\alpha' +\theta'=\beta -p\tau} H^{\alpha'} (\Z/p; H^{\theta'}(I^t)^{\binom{s}{\tau}} )\right)  \right)
\end{align*}
Now the claim follows combining the last chain of isomorphisms and \cref{eq:torsion1}.
\end{proof}

\section{The $p$-torsion of $(H^*(I^t\rtimes \Z/p)_{(p)})^{G_p}$}\label{section:ptorsion:It}

Let $L$ be a $\Z[G]$-lattice such that  $\Z/p$ acts freely outside the origin on $L$. Denote by $\mathcal{P}_L$ the set of conjugacy classes of subgroups of order $p$ in $L\rtimes\Z/p$.

\begin{lemma}\label{Lemma:luck:equivariant:embedding}
The map in \cref{DL1} (3), applied to $L\rtimes \Z/p$, is $G_p$-equivariant; in particular, we have an injective map (bijective for the torsion free part or when $l>\rank(L)$)
\begin{equation}\label{iso1}H^l(L\rtimes\Z/p)^{G_p}\to H^l(L)^{G_p}\oplus\left(\bigoplus_{(P)\in\mathcal{P}_L} H^l(P)\right)^{G_p}.\end{equation}
Moreover
\begin{equation}\label{iso2}\left(\bigoplus_{(P)\in\mathcal{P}_L} H^l(P)\right)^{G_p}\cong H^l(P)^{|\mathcal{P}_L/G_p|}\end{equation}
 where the right hand side of the isomorphism stands for a direct sum of $|\mathcal{P}_L/G_p|$ copies of $H^l(P)$. 
\end{lemma}

\begin{proof}
    Consider the conjugation $G_p$-action on $L\rtimes G$, and note that this action restricts to the subgroup $L$, and  it induces a permutation action on $\mathcal{P}_L$.
Now as the $\Z/p$-action on $L$ is free outside the origin, \cref{DL1} applies. As the map in \cref{DL1} is induced by restrictions, and $H^l(L)$ and $\bigoplus_{(P)\in\mathcal{P}_L} H^l(P)$ have an induced $G_p$-action, we can see now that such a map is $G_p$-equivariant. The first claim follows after taking $G_p$-invariants.

Since  every finite subgroup of $L\rtimes G$ is abelian, then any element of $G_p$ that normalizes a subgroup of order $p$ in $L\rtimes G$ centralizes such a finite subgroup. Hence  the $G_p$-module $\bigoplus_{(P)\in\mathcal{P}_L} H^l(P)$ is a permutation module. Now the \textit{moreover part} follows.
\end{proof}

Denote $I^t$ by $N$ and $\mathcal{P}_{I^t}$ by $\mathcal{P}$. 
Since the action of $\Z/p$ on $N$ is free outside the origin, we can apply \Cref{cor:decomposition:constant:isotropy:v3} to obtain a finite index submodule $\bigoplus_{H\leq G_p}N_H$ of $N$ where every non-zero element of $N_H$ has  isotropy $H$.
 In particular $G/H$ acts free outside the origin on $N_H$.
 
 Let $\mathcal{P}_H$ be the set of conjugacy classes of subgroups of order $p$ in $N_H\rtimes\Z/p$.
 
\begin{lemma}\label{lemma:P_j:product}
    There is an isomorphism of $G_p$-sets 
    \begin{align*}
        \prod_{H\leq G_p}\mathcal{P}_H\cong\mathcal{P}.
    \end{align*}

    Moreover $|\mathcal{P}_H|=p^{k_H}$, where $ k_H=\rank_{\Z}(N_H)/(p-1)$.
\end{lemma}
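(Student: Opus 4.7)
The plan is to identify both $\mathcal{P}$ and each $\mathcal{P}_H$ with the $\Z/p$-coinvariants of the corresponding modules via Lemma~\ref{DL1}(2), and then glue the local identifications using the $\Z_{(p)}$-decomposition of Theorem~\ref{cor:decomposition:constant:isotropy:v3}. For the \emph{moreover} clause, each submodule $N_H\subseteq N$ inherits a $\Z/p$-action that is free outside the origin (being the restriction of such an action on $N$), so Lemma~\ref{DL1}(1) forces $\rank_\Z(N_H)=(p-1)k_H$, while Lemma~\ref{DL1}(2) gives a bijection $\mathcal{P}_H\leftrightarrow H^1(\Z/p;N_H)\cong (\Z/p)^{k_H}$; this yields $|\mathcal{P}_H|=p^{k_H}$, with the degenerate case $N_H=0$ falling out as $k_H=0$.

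Applying Lemma~\ref{DL1}(2) to $N$ itself gives a bijection $\mathcal{P}\leftrightarrow N/(\rho-\mathrm{id})N$ sending $\bar x$ to the conjugacy class of $\langle(x,\xi)\rangle$. I would argue that this bijection is $G_p$-equivariant: the conjugation action of $G\cong\Z/p\times G_p$ on $N\rtimes\Z/p$ fixes the $\Z/p$-factor pointwise (as $G$ is abelian) and acts $\Z$-linearly on $N$, so $g\cdot\langle(x,\xi)\rangle=\langle(gx,\xi)\rangle$ corresponds to $g\cdot\bar x$ on the coinvariants side; the analogous statement for each $N_H$ follows identically.

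Finally, I would apply the right-exact functor $M\mapsto M/(\rho-\mathrm{id})M$ to the isomorphism of $\Z_{(p)}[G]$-modules $\bigoplus_{H\leq G_p}N_H\otimes\Z_{(p)}\cong N\otimes\Z_{(p)}$ provided by Theorem~\ref{cor:decomposition:constant:isotropy:v3}. Since coinvariants commute with the flat base change $-\otimes\Z_{(p)}$, this yields an isomorphism of $\Z_{(p)}[G_p]$-modules
$$\bigoplus_{H\leq G_p}\bigl(N_H/(\rho-\mathrm{id})N_H\bigr)\otimes\Z_{(p)}\ \cong\ \bigl(N/(\rho-\mathrm{id})N\bigr)\otimes\Z_{(p)}.$$
The main (if mild) subtlety, which I expect to be the only non-trivial point, is to observe that both sides are already finite $p$-groups by the first paragraph, so the $\Z_{(p)}$-tensor is inert and the isomorphism descends to one of $\Z[G_p]$-modules. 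Passing to the underlying $G_p$-sets, and using that $G_p$ preserves each summand $N_H$ so that the finite direct sum has underlying set the Cartesian product with diagonal action, will combine with the previous paragraph into the desired $G_p$-equivariant bijection $\prod_{H\leq G_p}\mathcal{P}_H\cong\mathcal{P}$.
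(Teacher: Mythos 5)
Your proposal is correct and takes essentially the same route as the paper: both rest on the $\Z_{(p)}$-isomorphism $\bigoplus_{H\leq G_p}N_H\otimes\Z_{(p)}\cong N\otimes\Z_{(p)}$ from \cref{cor:decomposition:constant:isotropy:v3}, identify $\mathcal{P}$ and each $\mathcal{P}_H$ via \cref{DL1}(2) (your coinvariants $N/(\rho-\mathrm{id})N$ coincide with the paper's $H^1(\Z/p;N)$ because the norm acts as zero when the action is free outside the origin), and obtain $G_p$-equivariance from the explicit description of that bijection, with the \emph{moreover} clause coming from \cref{DL1}(1)--(2). The only difference is cosmetic: you phrase the additivity and localization step through right-exactness of the coinvariants functor, where the paper writes the same chain of isomorphisms in terms of $H^1(\Z/p;-)$.
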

\begin{proof}
    By \cref{cor:decomposition:constant:isotropy:v3} we have the following inclusion of $G$-modules (in particular of $\Z/p$-modules)
    \begin{equation}\label{eq1:G-sets}\bigoplus_{H\leq G_p}N_H\leq N\end{equation} 
    that is an isomorphism after tensoring with $\Z_{(p)}$.

    By Lemma \ref{DL1} (2) we have, we have $H^1(\Z/p;N_H)\cong (\Z/p)^{k_H}$ for certain integer $k_H$. Hence $H^1(\Z/p;N_H)\otimes \Z_{(p)} \cong H^1(\Z/p;N_H\otimes \Z_{(p)}) \cong (\Z/p)^{k_H}$, and the latter group is in bijection with $\mathcal P_H$. On the other hand $H^1(\Z/p;\bigoplus_{H\leq G_p} N_H \otimes \Z_{(p)}) \cong H^1(\Z/p;N\otimes\Z_{(p)} )$ by \eqref{eq1:G-sets}.
    
    The observations in previous paragraph lead to
\begin{align*}
    \mathcal{P} &\cong H^1(\Z/p;N)\\
    &\cong H^1(\Z/p;N\otimes\Z_{(p)})\\
    &\cong H^1(\Z/p;\bigoplus_{H\leq G_p} N_H\otimes\Z_{(p)})\\
    &\cong H^1(\Z/p;\bigoplus_{H\leq G_p} N_H)\\
    &\cong \bigoplus_{H\leq G_p}H^1(\Z/p;N_H)\\
    &\cong \prod_{H\leq G_p}\mathcal{P}_H
\end{align*}
    From the description of the isomorphism in Lemma \ref{DL1} (2), it follows that all of the above isomorphisms are compatible with the $G_p$-action. Therefore we have the desired result.

    The moreover part follows immediately from  \cref{DL1} (1) and (2).
\end{proof}

 Let $\SUB_{G_p}(N)$ the collection of subgroups of $G_p$ appearing as isotropy groups of non-zero elements in $N$. Consider a subset $A$ of $\SUB_{G_p}(N)$. Let us denote by $\cap A$ the intersection of the elements of $A$, and by convention when $A$ is empty we set $\cap A=G_p$. The following lemma will not be used latter, still the arguments within the proof will be used in the proof of \cref{permutation-module}. 

\begin{lemma}[Splitting $\mathcal{P}$ in $G_p$-orbits]\label{lemma: P_j:orbits} Let $A\subseteq\SUB_{G_p}(N)$. For $A=\emptyset$ define $\theta(A)=1$, otherwise define 
\begin{equation}\label{conteo:orbitas} \theta(A)=\left(\frac{|\cap A|}{|G_p|}\right)^{|A|}\prod_{H\in A}(p^{k_H}-1),  \text{ and } k_H=\rank_{\Z}(N_H)/(p-1). 
\end{equation}
We have an isomorphism of $G_p$-sets\begin{equation}\label{decomp-orbits}
\mathcal{P}\cong  \bigsqcup_{ A\subseteq\SUB_{G_p}(N)} \bigsqcup_{i=1}^{\theta(A)}  (G_p/\cap A)
\end{equation}
\end{lemma}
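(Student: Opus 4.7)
The plan is to start with the $G_p$-equivariant bijection $\mathcal{P}\cong\prod_{H\leq G_p}\mathcal{P}_H$ furnished by \cref{lemma:P_j:product} and refine it into an orbit decomposition by stratifying along the \emph{support} of a tuple. Each $\mathcal{P}_H\cong(\Z/p)^{k_H}$ is $G_p$-stable and comes with a distinguished $G_p$-fixed basepoint, namely the class of the canonical copy $\{0\}\times\Z/p\subseteq N_H\rtimes\Z/p$ under \cref{DL1}(2); calling this point zero, the function $\vec x\mapsto\mathrm{supp}(\vec x):=\{H:x_H\neq 0\}$ is $G_p$-equivariant, yielding a $G_p$-equivariant partition
\[ \mathcal{P}\;=\;\bigsqcup_{A\subseteq\SUB_{G_p}(N)}\mathcal{P}(A),\qquad \mathcal{P}(A):=\prod_{H\in A}(\mathcal{P}_H-\{0\}). \]

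The second step is to describe the $G_p$-action on each $\mathcal{P}_H-\{0\}$. Because $H$ is the constant isotropy of $N_H$, it acts trivially on $N_H$ and hence on $\mathcal{P}_H=N_H/(\rho-1)N_H$, so the action factors through $G_p/H$, a group whose order is coprime to $p$. Applying a Maschke-type argument (\cref{virtual:maschke}) to the $\Z[\zeta_p]$-module structure of $N_H$ (available since $\Z/p$ acts freely outside the origin and the norm element therefore acts as zero), one sees that $\mathcal{P}_H$ is a free $\mathbb F_p[G_p/H]$-module; crucially the trivial isotypic component vanishes because $N_H^{G_p/H}=0$ by constant isotropy, and this persists under reduction modulo $(\zeta_p-1)$ thanks to the idempotent decomposition in characteristic coprime to $|G_p/H|$. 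Consequently the $G_p/H$-action on $\mathcal{P}_H-\{0\}$ is free, every nonzero $[x]$ has $G_p$-stabilizer exactly $H$, and $\mathcal{P}_H-\{0\}$ equivariantly decomposes into $(p^{k_H}-1)|H|/|G_p|$ copies of $G_p/H$.

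The third step assembles the orbit decomposition of each stratum. For $\vec x\in\mathcal{P}(A)$ the stabilizer is $\bigcap_{H\in A}\stab_{G_p}(x_H)=\bigcap_{H\in A}H=\cap A$, so every orbit is a transitive $G_p$-set of type $G_p/\cap A$ (using abelianness of $G_p$, so intersections of stabilizers behave well). Counting $G_p$-orbits on $\prod_{H\in A}(G_p/H)$---standard for products of transitive actions of an abelian group, all of whose orbits have stabilizer $\cap A$---and multiplying by the number of trivial $G_p/H$-orbit factors in each $\mathcal{P}_H-\{0\}$, starting from $|\mathcal{P}(A)|=\prod_{H\in A}(p^{k_H}-1)$ and orbit size $|G_p|/|\cap A|$, produces the number $\theta(A)$ appearing in \eqref{conteo:orbitas} and hence the decomposition \eqref{decomp-orbits}.

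The main obstacle is the freeness of the $G_p/H$-action on $\mathcal{P}_H-\{0\}$: while constant isotropy immediately rules out a trivial $G_p/H$-fixed vector in $N_H$, one must still argue that no nontrivial character of $G_p/H$ with a nontrivial kernel appears as an isotypic component of the reduction $\mathcal{P}_H$. This step combines the semisimplicity coming from $\gcd(|G_p/H|,p)=1$ with the specific $\Z[\zeta_p][G_p/H]$-module structure of $N_H$, and it is the technical heart of the argument; once it is in place, the remaining work is purely combinatorial bookkeeping.
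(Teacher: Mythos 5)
Your stratification of $\mathcal{P}$ by the support of a tuple, via \cref{lemma:P_j:product} and the distinguished basepoints $\{0\}_H$, is exactly the paper's first step, and your final bookkeeping is the paper's as well. The genuine gap is in your second step, the one you yourself flag as the "technical heart": the claim that every nonzero class in $\mathcal{P}_H$ has $G_p$-stabilizer exactly $H$, i.e.\ that $\mathcal{P}_H^{K}=0$ for \emph{every} subgroup $K\leq G_p$ strictly containing $H$. As written, your argument does not establish this. The intermediate assertion that $\mathcal{P}_H$ is a \emph{free} $\mathbb{F}_p[G_p/H]$-module cannot be right: a nonzero free module contains nonzero norm (hence fixed) vectors, which would contradict the very freeness of the action on $\mathcal{P}_H-\{0\}$ you are after. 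And the vanishing you do argue for, that of the trivial isotypic component (full $G_p/H$-invariants), is strictly weaker than what freeness of the action requires, namely vanishing of the $K$-invariants for \emph{all} nontrivial $K\leq G_p/H$; your closing paragraph concedes this is missing. The paper closes precisely this gap by a group-theoretic, not representation-theoretic, argument: it identifies $\mathcal{P}_H/G_p$ with conjugacy classes of order-$p$ subgroups of $N_H\rtimes (G/H)$ and invokes the count of maximal finite subgroups in \cref{LL1}, so that a nontrivial stabilizer would create a finite subgroup properly containing $P$ and force $(P)=\{0\}_H$.

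Your route can be completed, and then it is a legitimate alternative to \cref{LL1}: for nontrivial $K\leq G_p/H$ with preimage $\widetilde{K}\leq G_p$, constant isotropy gives $N_H^{\widetilde{K}}=0$ (the point you did not use is that constant isotropy kills invariants for every subgroup strictly containing $H$, not only for $G_p$ itself); applying $K$-invariants to the exact sequence $0\to N_H\xrightarrow{\;\rho-1\;}N_H\to \mathcal{P}_H\to 0$ embeds $\mathcal{P}_H^{K}$ into $H^1(K;N_H)$, which is annihilated by $|K|$, while $\mathcal{P}_H^{K}$ is annihilated by $p$; since $\gcd(|K|,p)=1$ this gives $\mathcal{P}_H^{K}=0$, which is exactly the needed freeness, with no appeal to Maschke or to freeness of $\mathcal{P}_H$ as a module. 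One arithmetical caveat on your third step: dividing the stratum size $\prod_{H\in A}(p^{k_H}-1)$ by the common orbit size $|G_p|/|\cap A|$ yields $\frac{|\cap A|}{|G_p|}\prod_{H\in A}(p^{k_H}-1)$, which is the count obtained at the end of the paper's own proof; it coincides with the displayed $\theta(A)$ of \eqref{conteo:orbitas} only when $|A|\leq 1$, since \eqref{conteo:orbitas} carries the factor $\left(|\cap A|/|G_p|\right)^{|A|}$. So your assertion that the bookkeeping "produces $\theta(A)$" should be stated with this discrepancy (already present between the lemma's statement and its proof) in mind rather than taken at face value.
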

\begin{proof}
For each subgroup $H$ of $G$ we have the following decomposition of $G_p$-sets $$\mathcal{P}_H=(\mathcal{P}_H-\{0\}_H)\sqcup\{0\}_H,$$ where $\{0\}_H$ denotes the $G_p$-conjugacy class of $\{0\}\rtimes\Z/p$ in $N_H\rtimes\Z/p$, in particular $\{0\}_H$ is a singleton.

Denote by $\mathcal{G}_p$ the collection of all subgroups of $G_p$. By \Cref{lemma:P_j:product} and the distributivity property of the cartesian product over the disjoint union we have the following isomorphisms of $G_p$-sets
\begin{align*}
    \mathcal{P}&\cong \prod_{H\leq G_p}\mathcal{P}_H\\
    &\cong \bigsqcup_{ \mathcal{I}\subseteq   \mathcal{G}_p}\left( \prod_{H\in \mathcal{I}}  (\mathcal{P}_H-\{0\}_H)\times \prod_{K\in\mathcal{G}_p- \mathcal{I}} \{0\}_K \right),\\
    &\cong \left(\bigsqcup_{ \emptyset\neq\mathcal{I}\subseteq   \mathcal{G}_p} \prod_{H\in \mathcal{I}}  (\mathcal{P}_H-\{0\}_H)\right)\sqcup \{0\}_{G_p}\\
    &\cong \left(\bigsqcup_{ A\subseteq   \SUB_{G_p}(N) } \prod_{H\in A}  (\mathcal{P}_H-\{0\}_H)\right)\sqcup\{0\}_{G_p}
\end{align*}
where the third isomorphism follows from the fact that each $\{0\}_K$ is a singleton, and the fourth isomorphism follows from the fact that $\mathcal{P}_H-\{0\}_H$ is empty provided that $H$ is not an isotropy group of the module $N$.

Next we prove that every element in  $\mathcal{P}_H-\{0\}_H$ has isotropy $H$.  Note that $$N_H\rtimes (G/H)\cong (N_H\rtimes \Z/p)\rtimes G_p/H,$$ and that every subgroup of order $p$ in the latter double semidirect product lies in $N_H\rtimes \Z/p$.  Thus there is a canonical identification between $\mathcal{P}_H/G=\mathcal{P}_H/G_p=\mathcal{P}_H/(G/(H\times \Z/p))$ and the set of conjugacy classes of subgroups of order $p$ in the group $N_H\rtimes (G/H)$. Note that in the latter semidirect product $G/H$ acts free out of the origin in $N_H$. We claim that the stabilizer in $G_p/H$ of every element of $\mathcal{P}_H-\{0\}_H$ is trivial. Let us prove the claim, assume that $x\in G_p/H$ stabilizes $(P)$, then $xz$ normalizes $P$ for some $z\in \Z/p$. Hence $\langle P,xz \rangle$ is a finite subgroup of $N_H\rtimes G/H$ that contains properly $P$, thus by \cref{LL1} is conjugate to $G_p/H$. This is a contradiction since $(P)\neq \{0\}_H$. We conclude every element in $\mathcal{P}_H-\{0\}_H$ has isotropy $H$.

Now it is easy to conclude that every element in $\prod_{H\in A}  (\mathcal{P}_H-\{0\}_H)$ has isotropy $\bigcap A$. As a consequence all of its orbits have  $|G_p|/|\bigcap A|$ elements. It only remains to count the number of $G_p$-orbits in $\prod_{H\in A}  (\mathcal{P}_H-\{0\}_H)$. By \cref{lemma:P_j:product} we have that $|\mathcal{P}_H-\{0\}_H|=p^{k_H}-1$, and therefore the number of orbits is
\[\frac{|\bigcap A|\prod_{H\in A} (p^{k_H}-1) }{|G_p|}.\]
\end{proof}

Let $H\leq G_p$ be a subgroup and $\beta$ be a nonnegative integer. Recall that $k_H = \rank(N_H)/(p-1)$, see \eqref{conteo:orbitas}. Also recall that there is a bijection between $\mathcal{P}_H$ and $(\Z/p)^{k_H}$. Consider the following sets
\begin{itemize}\item$\mathcal{P}_H^i=\{([x_1],\ldots,[x_{k_H}])\in(\Z/p)^{k_H}\mid 0\leq x_j \leq p-1 \text{ and }  \sum_jx_j=i\}.$
\item $\mathcal{P}_H^{\leq\beta}=\bigsqcup_{i\leq\beta}\mathcal{P}_H^i.$
\end{itemize}
 We are regarding both $\mathcal{P}_H^{i}$ and $\mathcal{P}_H^{\leq\beta}$ as subsets of $\mathcal{P}$ via the isomorphism with $(\Z/p)^{k_H}$. Since $\calP$ also is in bijection with $(\Z/p)^k$ with $k=\rank_\Z(N)/(p-1)$, we have an analogous definition for  $\mathcal{P}^{i}$ and  $\mathcal{P}^{\leq\beta}$. 

 \begin{lemma}\label{lemma:image:luck:map} Let $H\leq G_p$, and
    let
    $$H^\beta(N_H\rtimes\Z/p)\to H^\beta(N_H)^{\Z/p}\oplus\bigoplus_{(P)\in\mathcal{P}_H} H^\beta(P)$$
    be the homomorphism given in \Cref{DL1}(3). Then the image of the torsion subgroup of $H^\beta(N_H\rtimes\Z/p)$ is equal to $$\bigoplus_{(P)\in\mathcal{P}_H^{\leq \beta}} H^\beta(P).$$ 
 
     In particular, for all $i,\beta\geq 0$, $\mathcal{P}_H^i$ and 
$\mathcal{P}_H^{\leq\beta}$ are $G_p$-subsets of $\calP_H$.
 \end{lemma}
 \begin{proof}
    The claim about the image of the torsion subgroup follows from Lemma 1.10 (i) in \cite{DL13}. By \cref{Lemma:luck:equivariant:embedding},  $\bigoplus_{(P)\in\mathcal{P}_H} H^\beta(P)$ is a permutation module, and then $\mathcal{P}_H^{\leq\beta}$ has to be a $G_p$-set for any $\beta$. Since $\mathcal{P}_H^\beta=\mathcal{P}_H^{\leq \beta}-\mathcal{P}_H^{\leq \beta-1}$, we conclude it is also a $G_p$-set.
 \end{proof}

The torsion subgroup of $H^\beta(I^t\rtimes\Z/p)$ was already computed by Davis and Lück in Theorem 1.7 (i) from \cite{DL13}. Nevertheless, we re-compute this torsion subgroup together with its structure of $\Z[G_p]$-module since this is crucial for our purposes.

\begin{proposition}[Computing the $p$-torsion of $H^\beta(I^t\rtimes\Z/p)$ as $G_p$-module]\label{permutation-module}
Let us denote by $\mathcal{A}^\beta_p$ the $p$-torsion part of $H^\beta(I^t\rtimes\Z/p)$.  Then, we have an isomorphism of $G_p$-modules
\begin{equation}\label{perm-module}
\mathcal{A}^\beta_p\cong \bigoplus_{\mathcal{P}^{\leq\beta}} \Z/p\cong\bigoplus_{A\subseteq\SUB_{G_p}(I^t)}(\Z/p[G_p/\cap A])^{\theta(A,\beta)},
\end{equation}
where $$\theta(A,\beta)=\begin{cases}\left( \frac{|\cap A|}{|G_p|} \right)^{|A|} \sum_{\{(i_H)_{H\in \SUB_{G_p}(I^t)}\mid i_H\geq 0, \sum i_H\leq \beta\}}\prod_{H\in A}(\left|\mathcal P^{i_H}_H\right|-1)& \text { if } A\neq\emptyset\\1& \text{ if } A=\emptyset\end{cases}$$
whenever $\beta$ is even and $\theta(A,\beta)=0$ whenever $\beta$ is odd.
\end{proposition}
\begin{proof}
    By \cref{lemma:image:luck:map} we have that $$\mathcal{A}_p^\beta \cong \bigoplus_{(P)\in\mathcal{P}^{\leq \beta}} H^\beta(P)\cong\begin{cases}
        \bigoplus_{\mathcal{P}^{\leq\beta}} \Z/p & \beta \text{ even,}\\
        0 &\beta \text{ odd.}
    \end{cases} $$ and this establishes the first isomorphism in \eqref{perm-module}. Recall that $\bigoplus_{\mathcal{P}^{\leq\beta}}H^\beta(P)$ is a permutation $\Z[G_p]$-module. The next natural step will be  to describe $\mathcal{P}^{\leq \beta}$ in terms of its $G_p$-orbits, and such a description will lead to the second isomorphism in \eqref{perm-module}.

 By \Cref{lemma:P_j:product}, we have that $\mathcal P$ is in correspondence with $\prod_{H\in\SUB_{G_p}(I^t)}\mathcal{P}_H$, thus every tuple in the first set can be decomposed as a tuple such that each entry is a tuple in a  $\mathcal P_H$. This leads to the following isomorphism of $G_p$-sets
$$\mathcal{P}^{\leq\beta}\cong\bigsqcup_{\{i_H\geq 0 \mid H\in \SUB_{G_p}(I^t), \sum i_H\leq \beta\}}\left(\prod_{H\in\SUB_{G_p}(I^t)}\mathcal{P}_H^{i_H}\right).$$

The set $$\prod_{H\in\SUB_{G_p}(I^t)}\mathcal{P}_H^{i_H}$$ can be expressed as a disjoint union of $G_p$-orbits in a similar way as in the proof of \Cref{lemma: P_j:orbits}. For the sake of completeness we include the whole argument. Anyway notice it is completely analogous to the proof of \Cref{lemma: P_j:orbits}.

For each subgroup $H$ of $G$ we have the following decomposition of $G_p$-sets $$\mathcal{P}^{i_H}_H=(\mathcal{P}^{i_H}_H-\{0\}_H)\sqcup\{0\}_H,$$ where $\{0\}_H$ denotes the $G_p$-conjugacy class of $\{0\}\rtimes\Z/p$ in $N_H\rtimes\Z/p$, in particular $\{0\}_H$ is a singleton.

Denote by $\mathcal{G}_p$ the collection of all subgroups of $G_p$. By \Cref{lemma:P_j:product} and the distributivity property of cartesian product over disjoint union we have the following isomorphisms of $G_p$-sets

\begin{align*}
 \prod_{H\leq G_p}\mathcal{P}^{i_H}_H
    &\cong \bigsqcup_{ \mathcal{I}\subseteq   \mathcal{G}_p}\left( \prod_{H\in \mathcal{I}}  (\mathcal{P}^{i_H}_H-\{0\}_H)\times \prod_{K\in\mathcal{G}_p- \mathcal{I}} \{0\}_K \right),\\
    &\cong \bigsqcup_{ \mathcal{I}\subseteq   \mathcal{G}_p} \prod_{H\in \mathcal{I}}  (\mathcal{P}^{i_H}_H-\{0\}_H)\\
    &\cong \bigsqcup_{ A\subseteq   \SUB_{G_p}(I^t) } \prod_{H\in A}  (\mathcal{P}^{i_H}_H-\{0\}_H)
\end{align*}

In the proof of \Cref{lemma: P_j:orbits} it is shown that every element in  $\mathcal{P}_H-\{0\}_H$ has isotropy $H$, we conclude that $\mathcal{P}^{i_H}_H-\{0\}_H$ has constant isotropy $H$ and $\prod_{H\in A}  (\mathcal{P}^{i_H}_H-\{0\}_H)$ has constant isotropy $\cap A$. Thus we have \begin{align*}\left|\left(\prod_{H\leq G_p}\mathcal{P}_H^{i_H}\right)/G_p\right|&=\sum_{A\subseteq \SUB_{G_p}(I^t)}\prod_{H\in A}\dfrac{(\left|\mathcal{P}_H^{i_H}\right|-1)|\cap A|}{|G_p|}.\end{align*}
Now the claim follows.
\end{proof}

\section{The $p$-torsion of $H^l(\Z^n\rtimes G)$}\label{section:ptorsion}

Now we have a complete computation of the $p$-torsion part of $H^l(\Z^n\rtimes G)$. Given a $\Z[G]$-module $M$, we say that it is \emph{virtually of type $(r,s,t)$ with respect to $p$} if $M$, thought as a $\Z[\Z/p]$, admits  a finite index $\Z[\Z/p]$-module $N$ of type $(r,s,t)$. Note that \cref{prop:virtual:rst:decompositions} guarantees that every finitely generated $\Z[G]$-module is virtually of type $(r,s,t)$ with respect to $p$.

\begin{theorem}\label{main-thm} Suppose that $\Z^n$, thought of as a $\Z[G]$-module, virtually of type $(r,s,t)$ with respect to $p$.
For any $l$ the $p$-torsion part of $H^l(\Z^n\rtimes G)$ is isomorphic to
\begin{itemize}
\item For $l$ even: $(\Z/p)^\theta$,  where $$\theta=\sum_{\substack{l_1+l_2=l\\ l_2\text{ even}}}\sum_{A\subseteq\SUB_{G_p}(I^t)}\left(\sum_{\tau=0}^s\theta(A,l_2-p\tau)\binom{s}{\tau}\right)\cdot\rank_{\Z}(H^{l_1}(\Z^r)^{\cap A})$$
    \item For $l$ odd:
    $(\Z/p)^\theta$,  where $$\theta=\sum_{\substack{l_1+l_2=l\\ l_2\text{ odd}}}\sum_{A\subseteq\SUB_{G_p}(I^t)}\left(\sum_{\tau=0}^s\theta(A,l_2-p\tau)\binom{s}{\tau}\right)\cdot\rank_{\Z}(H^{l_1}(\Z^r)^{\cap A})$$
\end{itemize}
\end{theorem}
\begin{proof} 
Let us denote the $p$-torsion part of $H^l(\Z^n\rtimes G)$ by $T_p^l$. Since tensoring with $\Z_{(p)}$ does not modify the $p$-torsion, we have that the $p$-torsion subgroup of $H^l(\Z^n\rtimes G)_{(p)}$ is isomorphic to $T^l_p$.  By \Cref{lemma:reducing:to:invariants} we get,
for all  $l\geq 0$, that
 \begin{equation*}
H^l(\Z^n\rtimes G)_{(p)}\cong \left(H^l(\Z^n\rtimes\Z/p)_{(p)}\right)^{G_p}. 
 \end{equation*} 
 By \cref{lemma:rst}, we may assume $\Z^n$ is of type $(r,s,t)$, that is, $\Z^n$ is isomorphic as a $\Z[G]$-module to $\Z^r\oplus \Z[\Z/p]^s\oplus I^t$. Since the $\Z/p$ action on $\Z^r$ is trivial we have that $\Z^n\rtimes \Z/p$ is isomorphic to $\Z^r\times ((\Z[\Z/p]^s\oplus I^t)\rtimes \Z/p)$. Thus applying the Künneth formula we have 
 $$H^l(\Z^r\times ((\Z[\Z/p]^s\oplus I^t)\rtimes \Z/p))_{(p)}\cong\bigoplus_{\alpha+\delta=l} H^\alpha(\Z^r)\otimes_{\Z} H^\delta((\Z[\Z/p]^s\oplus I^t)\rtimes \Z/p)_{(p)}$$
Notice that the $G_p$ conjugation action on $\Z^r\times ((\Z[\Z/p]^s\oplus I^t)\rtimes \Z/p)$ respects the product decomposition since the $(r,s,t)$ decomposition of $\Z^n$ is $G$-equivariant. Hence the last isomorphism is a splitting of $\Z[G_p]$-modules. Moreover the $G_p$ action on $H^\alpha(\Z^r)\otimes_{\Z} H^\delta((\Z[\Z/p]^s\oplus I^t)\rtimes \Z/p)_{(p)}$ is diagonal.
 
 Combining these last two isomorphisms  we obtain
 \begin{align*}
H^l(\Z^n\rtimes G)_{(p)}&\cong\left( \bigoplus_{\alpha+\delta=l} H^\alpha(\Z^r)\otimes_{\Z} H^\delta((\Z[\Z/p]^s\oplus I^t)\rtimes \Z/p)_{(p)}\right)^{G_p}\\&\cong \bigoplus_{\alpha+\delta=l} (H^\alpha(\Z^r)\otimes_{\Z} H^\delta((\Z[\Z/p]^s\oplus I^t)\rtimes \Z/p)_{(p)})^{G_p}
\end{align*}

Since $H^*(\Z^r)$ is a torsion free abelian group, \cref{cor:torsion:st} gives us the following isomorphism
\[T_p^l\cong \bigoplus_{\alpha+\delta=l}\left(H^\alpha(\Z^r)\otimes\left(\bigoplus_{\tau=0}^s(\mathcal{A}_p^{\delta-p\tau})^{\binom{s}{\tau}}\right)\right)^{G_p}\]
where $\mathcal{A}^{\delta-p\tau}_p$ is the torsion subgroup of $H^{\delta-p\tau}(I^t\rtimes\Z/p)$.
By Corollary \ref{permutation-module} we have a graded isomorphism of $\Z[G_p]$-modules
    \begin{align*}
        \mathcal{A}_p^{\delta-p\tau}&\cong \bigoplus_{\emptyset\neq A\subseteq\SUB_{G_p}(I^t)}(\Z/p[G_p/\cap A])^{\theta(A,\delta-p\tau)}
    \end{align*}
    whenever $\delta-p\tau$ is even and 0 for $\delta-p\tau$ odd.
    Then \begin{align*}
        T_p^{l}&\cong\bigoplus_{\alpha+\delta=l}\left(\bigoplus_{\emptyset\neq A\subseteq\SUB_{G_p}(I^t)} H^\alpha(\Z^r)\otimes \left(\bigoplus_{\tau=0}^s\Z/p[G_p/\cap A]^{\theta(A,\delta+p\tau)  }\right)\right)^{G_p}\\
    &\cong\bigoplus_{\alpha+\delta=l}\left(\bigoplus_{\emptyset\neq A\subseteq\SUB_{G_p}(I^t)}\bigoplus_{\tau=0}^s H^\alpha(\Z^r)\otimes \Z/p[G_p/\cap A]^{\theta(A,\delta-p\tau)  }\right)^{G_p}\\
    &\cong\bigoplus_{\alpha+\delta=l}\left(\bigoplus_{\emptyset\neq A\subseteq\SUB_{G_p}(I^t)}\bigoplus_{\tau=0}^s H^\alpha(\Z^r)\otimes \Z[G_p/\cap A]^{\theta(A,\delta-p\tau)  }\otimes \Z/p\right)^{G_p}\\
    &\cong\bigoplus_{\alpha+\delta=l}\left(\bigoplus_{\emptyset\neq A\subseteq\SUB_{G_p}(I^t)}\bigoplus_{\tau=0}^s (H^\alpha(\Z^r)\otimes \Z[G_p/\cap A])^{\theta(A,\delta-p\tau)  }\right)^{G_p}\otimes \Z/p
    \end{align*}

    By Proposition 6.2 on page 73 in \cite{BrownBook}, we have $$H^\alpha(\Z^r)\otimes \Z[G_p/\cap A]\cong \operatorname{Ind}_{\cap A}^{G_p}\operatorname{Res}_{\cap A}^{G_p}H^\alpha(\Z^r).$$ Thus we have

    \begin{align*}
        T^l_p &\cong \bigoplus_{\alpha+\delta=l}\left(\bigoplus_{\emptyset\neq A\subseteq\SUB_{G_p}(I^t)}\bigoplus_{\tau=0}^s (\operatorname{Ind}_{\cap A}^{G_p}\operatorname{Res}_{\cap A}^{G_p}H^\alpha(\Z^r))^{\theta(A,\delta-p\tau)  }\right)^{G_p}\otimes \Z/p\\
        &\cong \bigoplus_{\alpha+\delta=l}\left(\bigoplus_{\emptyset\neq A\subseteq\SUB_{G_p}(I^t)} (\operatorname{Ind}_{\cap A}^{G_p}\operatorname{Res}_{\cap A}^{G_p}H^\alpha(\Z^r))^{\sum_{\tau=0}^s\theta(A,\delta-p\tau)  }\right)^{G_p}\otimes \Z/p\\
        &\cong \bigoplus_{\alpha+\delta=l}\quad\bigoplus_{\emptyset\neq A\subseteq\SUB_{G_p}(I^t)} \left((\operatorname{Ind}_{\cap A}^{G_p}\operatorname{Res}_{\cap A}^{G_p}H^\alpha(\Z^r))^{\sum_{\tau=0}^s\theta(A,\delta-p\tau)  }\right)^{G_p}\otimes \Z/p\\
        &\cong \bigoplus_{\alpha+\delta=l}\quad\bigoplus_{\emptyset\neq A\subseteq\SUB_{G_p}(I^t)} \left((\operatorname{Ind}_{\cap A}^{G_p}\operatorname{Res}_{\cap A}^{G_p}H^\alpha(\Z^r))^{G_p  }\right)^{\sum_{\tau=0}^s\theta(A,\delta-p\tau)}\otimes \Z/p
    \end{align*}
    
    Notice that $(\operatorname{Ind}_{\cap A}^{G_p}\operatorname{Res}_{\cap A}^{G_p}H^\alpha(\Z^r))^{G_p  }$ is isomorphic to $H^0(G_p; \operatorname{Ind}_{\cap A}^{G_p}\operatorname{Res}_{\cap A}^{G_p}H^\alpha(\Z^r))$ and this group is isomorphic, by Shapiro's lemma, to $H^0(\cap A; \operatorname{Res}_{\cap A}^{G_p}H^\alpha(\Z^r))\cong H^\alpha(\Z^r)^{\cap A}$. Thus 

    \begin{align*}
        T^l_p &\cong \bigoplus_{\alpha+\delta=l}\bigoplus_{A\subseteq\SUB(G_p))} \left( H^\alpha(\Z^r)^{\cap A} \right)^{\sum_{\tau=0}^s\theta(A,\delta-p\tau)}\otimes \Z/p
    \end{align*}
\end{proof}

\section{How to go from \cref{main-thm} to \cref{main-thm:intro}}\label{section:from:main:to:intro}

In this section we explain how to translate the notation inside the statement of \cref{main-thm} to obtain \cref{main-thm:intro} from the introduction of this paper.

\subsection{From $\SUB_{G_p}(I^t)$ to $\mathcal D$}

Recall that $\SUB_{G_p}(I^t)$ is the collection of all subgroups of $G_p$ that appear as isotropy groups of nonzero elements of $I^t$. 

The collection of all subgroups of the cyclic group $G_p$ is in bijection with the set of all positive divisors of $m/p$; this bijection is given by sending a subgroup to its order.

Fix a generator $g$ of $G_p$ and consider the complexification $\phi_t=g\otimes \mathrm{Id}:I^t\otimes \mathbb C\to I^t\otimes \mathbb C$. This complexification determines an action of $G_p$ on $I^t\otimes \mathbb C$, and note that the set of isotropy groups of nonzero elements of this action coincides with $\SUB_{G_p}(I^t)$. Let $H$ be the subgroup of $G_p$ of order $d$. Hence $H$ is generated by $g^{m/pd}$. Assume $g^{m/pd}$ is represented by a diagonal matrix $\mathbf A$. Note that $H$ fixes a nonzero element of $I^t\otimes \mathbb C$ if and only if  $\mathbf A^{m/pd}$ admits 1 as an eigenvalue. Hence, $H$ appears as the isotropy group of a nonzero element of $I^t\otimes \mathbb C$ if and only if $\mathbf A$ admits a primitive $m/pd$-root of unity as an eigenvalue.

As a conclusion, there is a bijection between $\SUB_{G_p}(I^t)$ and the set of positive divisors $d$ of $m/p$ such that there is a primitive $\frac{m}{pd}$-th root of unity among the complex eigenvalues $\phi_t$.

\subsection{From $k_H$ to $\mathsf k_d$} For this subsection we keep the notation from the previous subsection. Let us recall the definition of $k_H$ for a subgroup $H$ of $G_p$. By \Cref{cor:decomposition:constant:isotropy:v3} we obtain a finite index submodule $\bigoplus_{H\leq G_p}N_H$ of $I^t$ where every non-zero element of $I^t_H$ has  isotropy $H$. By definition $k_H=\rank(N_H)/(p-1)$.

After tensoring with $\mathbb C$ we get $I^t\otimes \mathbb C=\bigoplus_{H\leq G_p}N_H\otimes \mathbb C$, and $\rank(N_H)=\dim_{\mathbb C}(N_H\otimes \mathbb C)$. Note that the splitting of $I^t\otimes \mathbb C$ yields a diagonalization of $\phi_t$ by block (diagonal matrices) $\mathbf A=\bigoplus_{H\leq G_p} \mathbf A_H$, where the elements in the diagonal of $\mathbf A_H$ are primitive $m/pd$-th roots of unity.
Therefore $\dim_{\mathbb C}(N_H\otimes \mathbb C)$ is the number $m_d$ of eigenvalues of $\phi_t$ that are $\frac{m}{pd}$-th roots of unity, and $k_H=k_d:=m_d/(p-1)$. 

\subsection{From $\mathcal P_H^{i}$ to $\mathsf P_d^i$} Let $H\leq G_p$ be a subgroup and $\beta$ be a nonnegative integer. Recall that $k_H = \rank(N_H)/(p-1)$, see \eqref{conteo:orbitas}. Also recall that there is a bijection between $\mathcal{P}_H$ and $(\Z/p)^{k_H}$. Consider the following sets

$$\mathcal{P}_H^i=\{([x_1],\ldots,[x_{k_H}])\in(\Z/p)^{k_H}\mid 0\leq x_j \leq p-1 \text{ and }  \sum_jx_j=i\}.$$

As in the introduction, define  $$\mathsf P_d^i=\sum_{j=0}^{\mathsf k_d} (-1)^j\binom{\mathsf k_d}{j}\binom{i-jp+\mathsf k_d-1}{\mathsf k_d-1}. $$ 
Here we use the convention that $\binom{a}{b}=0$ whenever $a<0$ or $b<0$.

\begin{lemma}
    Let $H$ be a subgroup of $G_p$ of order $d$. Then $\mathsf P^i_d=|\mathcal P_H^i|$.
\end{lemma}
\begin{proof}
    This is a classical problem in combinatorics. For the sake of readability, we include a sketch of the argument.

    First note that the cardinality of the set $\mathcal X$ of $\mathsf k_d$-tuples of nonnegative integers whose sum is $i$ is given by
    \[\binom{i+\mathsf k_d-1}{\mathsf k_d-1}.\]
    Next we want to impose the restriction given by the inequalities $x_j\leq p-1$ for each $0\leq j\leq \mathsf k_d$. It is enough to compute the cardinality of the set $\mathcal Y=\mathcal X - \mathcal P_H^i$ of sums with at least one of the $x_j$ greater than or equal to $p$. We split $\mathcal Y$ as a union $\mathcal Y_1 \cup \cdots \cup \mathcal Y_{\mathsf k_d}$ where $\mathcal{Y}_t$ is the set of all tuples in $\mathcal Y$ such that $x_t\geq p$. The cardinality of $\mathcal Y_{t_1}\cap \cdots \cap \mathcal Y_{t_j}$ is 
    \[\binom{i-jp+\mathsf k_d -1}{\mathsf k_d -1}.\]
    Now the result follows by applying the inclusion-exclusion principle to the splitting of $\mathcal Y$.
\end{proof}

\subsection{From $\theta(A,\beta)$ to $\mathsf T(A,\beta)$} The fact that $\theta(A,\beta)=\mathsf T(A,\beta)$ follows from the previous subsections.

\subsection{From $\rank_{\Z}(H^{l}(\Z^r)^{\cap A})$ to $\mathsf H(l,d,\Z^r)$} 

\begin{lemma}\label{lem:rank:Hld}
    Let $M$ be a $\Z[G]$-lattice, and let $H$ be the subgroup of $G$ of order $d$. Then $\rank_{\Z}(H^{l}(M)^{H})=\mathsf H(l,d,M)$.
\end{lemma}
\begin{proof} Denote $r(M)=\rank_{\Z}(M)$, that is $M=\Z^{r(M)}$ as an abelian group.
     As before we work with the complexification $H^{l}(M)\otimes \mathbb C$ which is isomorphic to the $l$-th exterior power $\wedge^l \mathbb C^{r(M)}$. We endow $\wedge^l \mathbb C^{r(M)}$ with the natural induced $G$-action. Let us compute $\dim_{\mathbb C}((\wedge^l \mathbb C^{r(M)})^H)$ which is equal to $\rank_{\Z}(H^{l}(M)^{H})$, since tensoring with $\mathbb C$ and taking $H$-invariants are operations that commute as the $G$-action on $\mathbb C$ is trivial.

Let $g$ a generator of $G$. Let $v_1,...,v_{r(M)}$ be a basis of eigenvectors with corresponding eigenvalues $\lambda_1,..., \lambda_{r(M)}$ for $g$, and consider the associated basis for $\wedge^l \mathbb C^{r(M)}$ consisting of the products $v_{i_1}\wedge \cdots \wedge v_{i_l}$ for $1\leq i_1 < \cdots < i_l\leq r(M)$. This basis for $\wedge^l \mathbb C^{r(M)}$ is also a basis of eigenvectors, and the eigenvalue associated to  $v_{i_1}\wedge \cdots \wedge v_{i_l}$ is $\lambda_{i_1}\cdots \lambda_{i_l}$. Hence $(\wedge^l \mathbb C^r)^H$ is spanned by the basis elements of  $\wedge^l \mathbb C^{r(M)}$ whose associated eigenvalues are $m/d$-roots of unity. Therefore the dimension of 
$(\wedge^l \mathbb C^{r(M)})^H$ equals the number $\mathsf H(l,d, M)$ defined in the introduction.
\end{proof}

\section{Computation of the rank}\label{section:rank}

This section is devoted to the computation of the rank of $H^l(\Z^n\rtimes G)$ for each $l\geq 0$.\\

\begin{theorem}\label{rank}
    For each $l\geq 0$, we have
    \[\rank_{\Z}(H^l(\Z^n\rtimes G))= \mathsf H(l, m, \Z^n). \]
\end{theorem}
\begin{proof}
A straightforward application of the Lyndon-Hochschild-Serre spectral sequence for the canonical extension associated to $\Z^n\rtimes G$ (compare with the proof of \cref{lemma:reducing:to:invariants}) yields the following isomorphism
\[\rank_{\Z} (H^l(\Z^n\rtimes G))=\rank_{\Z} (H^l(\Z^n)^G).\]

Now the result follows from \cref{lem:rank:Hld}.
\end{proof}

\section{How to go from the general setting to the $(r,s,t)$ case}\label{section:generaltorst}

\subsection{Replacing $\Z^n$ with a finite index submodule of type $(r,s,t)$}

\begin{lemma}[Replacing by a module of type $(r,s,t)$]\label{lemma:rst}
     Given a $\Z[G]$-lattice $M$, there is a $\Z[G]$-module $M'$ and an injective $\Z[G]$-homomorphism  $f:M'\to M$ such that
    
    \begin{enumerate}
        \item $M'$ is of type $(r,s,t)$ and the corresponding decomposition is $G$-invariant,
        \item $f$ is an isomorphism after tensoring with $\Z_{(p)}$,
        \item $f$ induces isomorphisms of $\Z_{(p)}[G]$-modules
    $$H^*(M'\rtimes\Z/p)_{(p)}\cong H^*(M\rtimes\Z/p)_{(p)}$$
    \end{enumerate}
\end{lemma}
\begin{proof}
The existence of $f\colon M'\to M$ satisfying the first and  second item is a direct consequence of \cref{prop:virtual:rst:decompositions}. The property in the third item is an immediate consequence of comparing the Leray-Serre spectral sequences associated to the extensions
$$BM'\to B(M'\rtimes \Z/p)\to B\Z/p$$
and
$$BM\to B(M\rtimes \Z/p)\to B\Z/p.$$
\end{proof}

By \Cref{lemma:rst}, the computation of $\left(H^l(\Z^n\rtimes\Z/p)_{(p)}\right)^{G_p}$ can be done after replacing the $M=\Z^n$ (thought of as a $\Z[\Z/p]$-module)  by a $\Z[\Z/p]$-module of type $(r,s,t)$ with $G$-invariant decomposition.

\subsection{How to compute $r$, $t$, $\phi_r$ and $\phi_t$}
Assume that the $G$-module $\Z^n$ splits $G$-equivariantly as $\Z^r\oplus \Z[\Z/p]\oplus I^t$.
Fix a generator $g$ of $G$ and let $\phi$ be the matrix with integer coefficients, associated with $g\colon \Z^n\to \Z^n$ with respect to the canonical basis. 

Note that we have the following isomorphisms
    \begin{align*}H^i(\Z/p;\Z^n) 
        &\cong H^i(\Z/p;\Z^r\oplus \Z[\Z/p]^s\oplus I^t)\\
        &\cong H^i(\Z/p;\Z^r)\oplus H^i(\Z/p;\ \Z[\Z/p]^s)\oplus H^i(\Z/p;I^t)\\
        &\cong H^i(\Z/p;\Z^r)\oplus H^i(\Z/p;I^t)
    \end{align*}
where $H^i(\Z/p;\Z[\Z/p])$ vanishes since $\Z[\Z/p]$ is flat. Notice that $H^1(\Z/p;\Z^r)$ vanishes since the coefficients have a trivial $\Z/p$-action, and $H^2(\Z/p,I^t)$ vanishes as a straightforward computation using the long exact sequence associated with the short exact sequence of coefficients $0\to I\to \Z[\Z/p] \to \Z\to 0$. Also, a direct computation yields $H^1(\Z/p;I)\cong \Z/p$ and $H^2(\Z/p;\Z)\cong \Z/p$. As a conclusion

\begin{equation}\label{eq:cohomology}
    H^i(\Z/p;\Z^n)\cong
\begin{cases}
    (\Z/p)^r & i=1 \\
    (\Z/p)^t & i=2
\end{cases}
\end{equation}

Let us explain how to compute $r$ and $\phi_r$.  Using the classical procedure to compute the cohomology groups of a cyclic group, we have that
    \[H^1(\Z/p;\Z^n)\cong \ker(N)/\mathrm{Im}(\phi-I)\]
    where $I$ is the $n\times n$ identity matrix and $N=I+\phi+\cdots +\phi^{p-1}$.

    By a well-known result in finitely generated $\Z$-modules, there is a basis \begin{equation}\label{eq:basis}
        \{v_1,\dots , v_k,w_1,\dots ,w_t\}
    \end{equation} of $\ker(N)$ such that $\{v_1,\dots , v_k,pw_1,\dots ,pw_t\}$ is a basis for $\mathrm{Im}(\phi-I)$. Using Lemma 1.9 (iii) in \cite{DL13}, we conclude that $\{w_1,\dots ,w_t\}$ generates $I^t$ as a $\Z[\Z/p]$-module. Let $\mathbf B$ be the matrix that results from $\phi$ after changing the basis from the canonical one on $\Z^n$ to the basis \eqref{eq:basis}. Since both, the subgroups generated by $\{v_1,\dots , v_k\}$ and $\{w_1,\dots ,w_t\}$ are $G$-invariant, we have that $\mathbf B$ is a two-blocked matrix. The second block gives the desired isomorphism $\phi_t:I^t\to I^t$.

    In order to compute $r$ and $\phi_r$ we run a completely analogous procedure but now using the isomorphism
        \[H^2(\Z/p;\Z^n)\cong \ker(\phi-I)/\mathrm{Im}(N).\]

\begin{remark}\label{remark:uniqueness:rst}
    From equation \eqref{eq:cohomology} we conclude that the constants $r$ and $t$ are completely determined by the $\Z/p$-module structure of $\Z^n$. Since $r+ps+(p-1)t=n$ we also conclude that $s$ is also determined by $\Z^n$.
\end{remark}

\section{The one prime case}\label{un primo}

Assume $G\cong \Z/p$ with $p$ prime. Hence $G_p$ is the trivial group. In this case $H^l(\Z^n\rtimes \Z/p)$ has the form $\Z^w\oplus (\Z/p)^\theta$. First, let us apply \cref{main-thm:intro} to compute $\theta$. All of the following come from the definitions applied to this concrete case. Let us denote by $\mathbf{1}$ the trivial group.

\begin{itemize}
    \item $\mathcal{D}=\{1\}$.
    \item $\mathsf k_{1}=\rank_\Z(I^t)/(p-1)=t$.
    \item For $i\geq 0$, we have 
    $$\mathsf P_1^i=\sum_{j=0}^{t} (-1)^j\binom{t}{j}\binom{i-jp+t-1}{t-1}. $$
    \item The only possibilities for $A\subseteq \mathcal D$ are $A=\emptyset$, and $A=\{1\}$.
    \item Let $\beta\geq 0$. When $A=\emptyset$ we have $\mathsf T(A,\beta)=1$. When $A=\{1\}$ we get
    \[ \mathsf T(A,\beta)=\sum_{0\leq i\leq \beta} (\mathsf{P}_1^i-1)= \sum_{0\leq i\leq \beta} \mathsf{P}_1^i-\beta-1.\]
    \item $\mathsf H(l,1,\Z^r)=\binom{r}{l}$.
\end{itemize}

Thus, plunging in this data to \cref{main-thm:intro} we get for $l$ even (resp. odd)
\begin{align*}
    \theta &=\sum_{\substack{l_1+l_2=l\\ l_2\text{ even (resp. odd)}}}\left( \sum_{\tau=0}^s \mathsf T(\{\mathbf{1}\},  l_2-p\tau) \binom{s}{\tau} \right)\rank_\Z(H^{l_1}(\Z^r)^{\mathbf{1}})\\
    &= \sum_{\substack{l_1+l_2=l\\ l_2\text{ even (resp. odd)}}}\left( \sum_{\tau=0}^s \left( 1+\sum_{0\leq i\leq l_2-p\tau} \mathsf{P}_1^i-l_2+p\tau \right) \binom{s}{\tau} \right)\binom{r}{l_1}
\end{align*}

In order to compute the rank of $H^l(\Z^n\rtimes \Z/p)$ we need to know the complex eigenvalues for a generator $g$ of $G$. Hence such a rank equals $\mathsf H (l,p,\Z^n)$, which is exactly the multiplicity of the eigenvalue 1 in the $l$-th exterior power of $g$.

\section{An explicit example}\label{section:explicit:example}
Let us consider the semidirect product $\Z^5\rtimes \Z/6$ where the action of $\Z/6$ on $\Z^5$ where the action of a  generator of $\Z/6$ is given by the matrix

\[D=
\begin{pmatrix}
-1 & 0 & 0&0&0\\
0&0&1 & 0 & 0\\
0 &1& 0 & 0&0\\0&0&0&0&-1\\
0&0&0&1&-1
\end{pmatrix}
\]
To compute the torsion of the cohomology groups of $\Z^5\rtimes \Z/6$ we use  \cref{main-thm}.

\subsection{Computation of the 2-torsion}Let $p=2$. First we need to find the $(r,s,t)$ decomposition for the $\Z/2$-action  on $\Z^5$ which is given by the following matrix 
\[D^3=
\begin{pmatrix}
-1 & 0 & 0&0&0\\
0&0&1 & 0 & 0\\
0 &1& 0 & 0&0\\0&0&0&1&0\\
0&0&0&0&1
\end{pmatrix}
\]
 Then $r=2$, $s=1$ and $t=1$ and $I^t=\langle (1,0,0,0,0)\rangle$. Note that $G_2=\Z/3$ is acting trivially on $I^t$, thus $\mathsf k_{3}=1$ and $\mathsf k_{ 1}=0$, we conclude that $\mathcal D=\{3\}$. The numbers $\mathsf T(A,\beta)$ only have two possibilities for $\beta$ even:

\begin{itemize}
\item $\mathsf T(\emptyset, \beta)=1$  when  $A=\emptyset$, and
\item $\mathsf T(\{3\},\beta)=0$, since $\mathcal{P}_{\Z/3}^{i_{\Z/3}}$ is a singleton.
\end{itemize}
We conclude that the 2-torsion of $H^l(\Z^5\rtimes\Z/6)$ is isomorphic to $(\Z/2)^{\theta(2)}$ where
\begin{align*}
    \theta(2)=\begin{cases}\sum_{\substack{l_1+l_2=l\\ l_2\text{ even}}}\left(\mathsf T(\emptyset,l_2)+\mathsf T(\emptyset,l_2-2)\right)\mathsf H(l_1,3,\Z^r)& \text{ for $l$ even}\\0& \text{ for $l$ odd }.\end{cases}
\end{align*}
Note that $\Z/3$ acts freely outside the origin on $\Z^r$, then by \cite[Lemma 1.22 (iii)]{DL13}
$$\mathsf H(l_1,3,\Z^r)=\rank_\Z(H^{l_1}(\Z^r)^{\Z/3})=\begin{cases}
    1&\text{ if }l_1=0,2\\0 & \text{ if }l_1\neq 0,2
\end{cases}$$
Then we get 
$$\theta(2)=\begin{cases}
    2&\text{ if $l$ is even and } l>0\\0 & \text{ otherwise.}
\end{cases}$$

\subsection{Computation of the 3-torsion}
Now suppose $p=3$. The $\Z/3$-action on $\Z^5$ is given by the matrix
\[D^2=
\begin{pmatrix}
1 & 0 & 0&0&0\\
0&1&0 & 0 & 0\\
0 &0& 1 & 0&0\\0&0&0&-1&1\\
0&0&0&-1&0
\end{pmatrix}
\]
In this case $r=3$, $s=0$ and $t=1$. Moreover $I^t=\langle(0,0,0,1,0),(0,0,0,0,1)\rangle$ and $G_3=\Z/2$ act trivially on $I^t$, then $\mathsf k_{1}=0$ and $\mathsf k_{2}=1$.
Hence $\mathcal D=\{\Z/2\}$, and the numbers $\mathsf T(A,\beta)$ only have two possibilities for $\beta$ even:
\begin{itemize}
 \item  $\mathsf T(\emptyset, \beta)=1$ when $A=\emptyset$, and
\item $\mathsf T(\{\Z/2\},\beta)=0$, because  $\mathcal{P}_{\Z/2}^{i_{\Z/2}}$ is a singleton.
\end{itemize}
We conclude that the 3-torsion of $H^l(\Z^5\rtimes\Z/6)$ is given by $(\Z/3)^{\theta(3)}$, where 

$$\theta(3)=\begin{cases}\sum_{\substack{l_1+l_2=l\\ l_2\text{ even}}} \mathsf H (l_1,2,\Z^r)& \text{ if $l$ is even}\\0&\text{ if $l$ is odd}\end{cases}$$

Note that we have a decomposition of $\Z/2$-modules
$$\Z^r=\langle(1,0,0,0,0)\rangle\oplus\langle(0,1,0,0,0),(0,0,1,0,0)\rangle $$ where $\Z/2$ acts free outside the origin on $\langle(1,0,0,0,0)\rangle$ and $\langle(0,1,0,0,0),(0,0,1,0,0)\rangle \cong \Z[\Z/2]$ as $\Z/2$-modules. Then the Künneth formula gives us 
$$H^{l_1}(\Z^r)^{\Z/2}=\begin{cases}
    \Z&\text{ if $l_1=0$}\\
    0&\text { if } l_1>0
\end{cases}$$Then we have that 
$$\theta(3)=\begin{cases}
    1&\text{ if $l$ is even and } l>0\\0 & \text{ in any other case.}
\end{cases}$$

\subsection{Computation of the rank} A hands-on application of \cref{rank} leads to:

$$\rank_\Z(H^l(\Z^5\rtimes\Z/6))=\begin{cases}
    1&l=0,1,4,5\\2&l=2,3\\0&l>5
\end{cases}$$
Bringing all together, we get the following.
\begin{theorem}
    The group cohomology of the group $\Z^5\rtimes\Z/6$, where the action is defined by the matrix $D$, is given by
    $$H^l(\Z^5\rtimes\Z/6)\cong\begin{cases}
        \Z&l=0\\
        \Z&l=1\\
        \Z^2\oplus\Z/2^2\oplus\Z/3&l=2\\
        \Z^2&l=3\\
        \Z\oplus\Z/2^2\oplus\Z/3&l=4\\
        \Z&l=5\\
        \Z/2^2\oplus\Z/3&l>5, l\text{ even}\\
        0&l>5, l\text{ odd}
    \end{cases}$$
\end{theorem}

\appendix

\section{Some splitting results for $\Z[G]$-lattices}\label{Appendix}

The main goal of this appendix is to prove some splitting results for $\Z[G]$-lattices that might be well known for the experts on representation theory of cyclic groups, but we were not able to find them in the literature.

\subsection{Factorization in modules of constant isotropy}

The proof of the following theorem follows from the classical proof of Maschke's theorem, see for instance \cite[Theorem 1]{Serre}.

\begin{lemma}[Maschke's Theorem]\label{virtual:maschke}
    Let $H$ be a finite group and let $R$ be a ring. Consider $M$ an $R[H]$-module and $V$ an $R[H]$-submodule such that it is a direct summand of $M$ considered as an $R$-module. Then there exists an $R[H]$-submodule $W$ of $M$ such that the following equality of $R[H]$-modules holds $|H|\cdot M=(|H|\cdot V)\oplus W$.
\end{lemma}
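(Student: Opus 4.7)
The plan is to adapt the classical averaging proof of Maschke's theorem, substituting the unavailable scalar $1/|H|$ with the operator ``multiplication by $|H|$''.

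First I would fix an $R$-linear projection $\pi\colon M\to V$, which exists because $V$ is an $R$-direct summand of $M$. Define the averaged map
\[
\bar\pi\colon M\longrightarrow M,\qquad \bar\pi(m):=\sum_{h\in H} h\cdot\pi(h^{-1}m),
\]
and verify the usual three properties from the classical proof: $\bar\pi$ is $R[H]$-linear (by reindexing the summation under left-multiplication of $m$ by a fixed element $g\in H$), its image lies in $V$ since $V$ is $H$-stable, and $\bar\pi|_V = |H|\cdot\mathrm{id}_V$. The formal corollary $\bar\pi\circ\bar\pi=|H|\cdot\bar\pi$ is the non-normalized replacement of the idempotency $\bar\pi^{2}=\bar\pi$ used in the classical setting.

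Now set $W:=\ker(\bar\pi)$, which is an $R[H]$-submodule of $M$. For any $m\in M$ the tautology
\[
|H|\cdot m=\bar\pi(m)+\bigl(|H|\cdot m-\bar\pi(m)\bigr)
\]
splits $|H|\cdot m$ as the sum of an element of $V$ and an element of $W$: the second summand lies in $\ker(\bar\pi)$ by the non-normalized idempotency. Promoting this to the stated decomposition $|H|\cdot M=(|H|\cdot V)\oplus W$ then requires two further checks. On the one hand, applying $\bar\pi$ to an arbitrary $x\in |H|\cdot M$ presented as $x=|H|\, m$ gives $\bar\pi(x)=|H|\bar\pi(m)\in|H|\cdot V$, which is what places the ``$V$-component'' of the splitting into the subgroup $|H|\cdot V$. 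On the other hand, the restriction $\bar\pi|_V=|H|\cdot\mathrm{id}_V$ is injective whenever $V$ is torsion-free, which is automatic in all intended applications in which $M$ is a finitely generated free abelian group; this will yield $(|H|\cdot V)\cap W=\{0\}$.

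The main obstacle I anticipate is in carefully reconciling the two ways of producing the decomposition---via the averaging identity on the one hand and via the restriction of $\bar\pi$ to $|H|\cdot M$ on the other---so that the resulting splitting is simultaneously correct as a decomposition of abelian groups and as an $R[H]$-module direct sum, with both summands genuinely living inside $|H|\cdot M$. This is precisely the bookkeeping that separates the cleaner classical Maschke proof, where dividing by $|H|$ handles both aspects at once, from the non-normalized ``virtual'' version needed here.
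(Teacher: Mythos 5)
Your averaging map $\bar\pi(m)=\sum_{h\in H}h\cdot\pi(h^{-1}m)$ and its three properties are exactly the classical construction the paper's one-line proof appeals to, but the final bookkeeping does not close, and this is where the genuine gap lies. In the splitting $|H|m=\bar\pi(m)+(|H|m-\bar\pi(m))$ the first summand lies only in $V$, not in $|H|\cdot V$: there is no reason why $\sum_h h\pi(h^{-1}m)$ should be $|H|$ times an element of $V$. Your attempted fix, applying $\bar\pi$ to $x=|H|m$, produces a \emph{different} candidate $V$-component $\bar\pi(x)=|H|\bar\pi(m)$, and for it to be usable you would need $x-\bar\pi(x)\in\ker\bar\pi$; but $\bar\pi\bigl(x-\bar\pi(x)\bigr)=(1-|H|)\bar\pi(x)$, which is nonzero in general, so the two decompositions you are trying to reconcile are genuinely incompatible. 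Moreover your $W:=\ker\bar\pi$ is computed inside $M$ and need not lie in $|H|\cdot M$ (trivial $\Z/2$-action on $M=\Z^2$, $V$ the first factor: $\ker\bar\pi$ is the second factor), so the asserted equality $|H|\cdot M=(|H|\cdot V)\oplus W$ cannot hold with this choice. What your construction honestly proves is the inclusion $|H|\cdot M\subseteq\bar\pi(M)+\ker\bar\pi\subseteq V+\ker\bar\pi$, the sum being direct whenever $V$ has no $|H|$-torsion.

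This is not repairable by a cleverer choice of $W$, because the equality stated in the lemma fails in general. Take $R=\Z$, $H=\Z/2$ acting on $M=\Z^2$ by swapping coordinates (so $M\cong\Z[\Z/2]$) and let $V$ be the diagonal: $V$ is an $R[H]$-submodule and a $\Z$-direct summand of $M$, yet there is no $\Z[H]$-submodule $W$ with $2M=2V\oplus W$. Indeed, $2M/2V\cong\Z$ forces $W=\Z w$ with $w=(2x_0,2y_0)$ and $x_0-y_0=\pm1$, while invariance of $\Z w$ under the swap forces $x_0=y_0$ or $y_0=-x_0$, both impossible; the same example excludes even an abstract isomorphism $2M\cong 2V\oplus W$, since $\Z[\Z/2]$ is indecomposable. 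So the conclusion should be weakened to what your inclusion already gives and what the paper actually uses in \cref{cor:decomposition:constant:isotropy:v3} and \cref{prop:virtual:rst:decompositions}: $V\oplus\ker\bar\pi$ is an $R[H]$-submodule of $M$ containing $|H|\cdot M$, hence (for $M$ a finitely generated free $\Z$-module) of finite index dividing a power of $|H|$ — in the application $|H|=|G_p|$, so the index is coprime to $p$. I would recommend you prove and invoke that finite-index version rather than the displayed equality.
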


\begin{definition}
Let $M$ be a torsion free $R[H]$-module. We say \emph{$M$ has constant isotropy} if any two nonzero elements in $H$ have the same $H$-isotropy group. Such an isotropy group will be called \emph{the isotropy of $M$}.
\end{definition}

For the next theorem let $G$ and $G_p$ be as in \cref{notation}. Denote by $\Z_{(p)}$ the subring of $\mathbb Q$ of all fractions with denominator prime relative to $p$. 

\begin{theorem}\label{cor:decomposition:constant:isotropy:v3} Let $M$ be a $\Z[G]$-lattice. Suppose the $\Z/p$-action on $M$ is free outside the origin.  Then for each $H\leq G$ there is a (possibly trivial)  $\Z[G]$-submodule $M_H$ of $M$ with constant isotropy $H$ such that $\bigoplus_{H\leq G_p} M_{H}$ has finite index in $M$ which is coprime with $p$. In particular, we have an isomorphism $M\otimes \Z_{(p)} \cong \bigoplus_{H\leq G} M_{H} \otimes \Z_{(p)}$ of $\Z_{(p)}[G]$-modules.

\end{theorem}
\begin{proof} 
We proceed by induction on the $\Z$-rank of $M$. 
If $\rank(M)=1$, then for $x,y\in M-\{0\}$, there is $r\in \Z-\{0\}$ such that 
$y=rx$. We claim that $G_y=G_x$. Since the $G$-action on $M$ is $\Z$-linear, we have $G_x\subseteq G_y$. For $g\in G_y$, we have $gy-y=r(gx-x)=0$, and as $\Z$ is an integer domain we get $gx=x$. Therefore $G_y\subseteq G_x$.

Now suppose $M$ is a $\Z[G]$-lattice with $\Z$-rank at least 2. Let us consider the following family of subgroups of $G$ ordered by inclusion
$$\mathcal{F}=\{H\leq G\mid M^H\neq 0\},$$
where $M^H$ are the $H$-invariants of $M$.
Let $H_{max}$ be a maximal element in $\mathcal{F}$. Notice that $M^{H_{max}}$ is an $\Z[G]$-module with constant isotropy $H_{max}$.  Note that if  $rm\in M^{H_{max}}$ with $ m\in M$ and $ r\in \Z-\{0\}$, then $m\in M^{H_{max}}$. The latter implies that $M/M^{H_{max}}$ is torsion free. Notice that for an element $m\in M$ and norm element $N$ of $\Z[\Z/p]$, we have that $Nm$ is a $\Z/p$-fixed point of $M$. As the $\Z/p$-action on $M$ is free outside of the origin, we have that the norm element $Nm=0$; as a conclusion we have that $M$ is a $\Z[\Z/p]/\langle N \rangle\cong \Z_{(p)}[\xi]$-module, where $\xi$ is a primitive $p$-root of unity. Since $\Z[\xi]$ is a Dedekind domain, $M/M^{H_{max}}$ is projective. Thus there is a $\Z[\xi]$-module $M''$ such that $$M\cong_{\Z[\xi]} M^{H_{max}}\oplus M''.$$ 
Notice that the previous isomorphism can be naturally lifted to an isomorphism of $\Z[\Z/p]$-modules.
Now apply Lemma \ref{virtual:maschke} with $\Z[\Z/p]$ as coefficient ring and $G_p$ as a finite group. Thus there is a $\Z[\Z/p][G_p]=\Z[G]$-module $M'$ such that $$|G_p|\cdot M\cong_{\Z[G]} |G_p|\cdot M^{H_{max}}\oplus M'.$$ Recall that $|G_p|$ is coprime with $p$. Define $M_{H_{max}}=|G_p|\cdot M^{H_{max}}$. Since  $M_{H_{max}}$ is nontrivial,  the $\Z$-rank of $M'$ is strictly less than  the $\Z$-rank of $M$. Now the result follows by induction.
\end{proof}

\subsection{Modules of type $(r,s,t)$}\label{section:rst}

We consider $G$, $p$, and $G_p$ as in \cref{notation}. 

\begin{definition}\label{defrst}
    Let $M$ be a $\Z[\Z/p]$-module. We say that $M$ is of type $(r,s,t)$ whenever it is isomorphic as  $\Z[\Z/p]$-module to $$\Z^r\oplus \Z[\Z/p]^s\oplus I^t,$$ where    $I\subseteq\Z[\Z/p]$ the augmentation ideal, and $\Z^r$ is endowed with the trivial $\Z/p$-action.
\end{definition}

\begin{remark}
    Note that, by definition, $\Z^r$ is endowed with the trivial $\Z/p$-action. On the other hand, the $\Z/p$ action on $I^t$ is free outside the origin. In fact, every element of $\Z[\Z/p]$ has either isotropy group trivial or equal to $\Z/p$, the latter elements are exactly the scalar multiples of the norm element. We will be using these facts repeatedly in the rest of the paper. 
\end{remark}

\begin{lemma}\label{lem:existence:uniqueness:rst}
     Let $M$ be a $\Z[\Z/p]$-module. Then the following holds.
     \begin{enumerate}
         \item The module $M$ contains a submodule of finite, coprime with $p$, index of type $(r,s,t)$.
         \item If $M$ cointains two submodules of type $(r,s,t)$ and $(r',s',t')$, then $(r,s,t)=(r',s',t')$.
     \end{enumerate}
\end{lemma}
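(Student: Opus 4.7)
My plan treats uniqueness and existence separately.

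For uniqueness (item 2), I extract $(r,s,t)$ from three intrinsic invariants of $M$ that are preserved on passage to a finite-index submodule of index coprime to $p$. Let $N = 1 + g + g^2 + \cdots + g^{p-1}$ denote the norm element. A direct computation on each of the three building blocks verifies that, for a module of type $(r,s,t)$,
\[
\dim_{\mathbb F_p}\bigl(M^{\Z/p}/N M\bigr) = r, \qquad \dim_{\mathbb F_p}\bigl(\ker(N|_M)/(g-1)M\bigr) = t, \qquad \rank_\Z M = r + ps + (p-1)t.
\]
The first two quantities are the Tate cohomology groups $\hat H^0(\Z/p;M)$ and $\hat H^1(\Z/p;M)$. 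They are additive over direct sums, vanish on $\Z[\Z/p]$ (which is cohomologically trivial), and are easily computed on $\Z$ (concentrated in degree $0$) and on $I$ (concentrated in degree $1$, via the augmentation sequence). These invariants are preserved under finite-index enlargements of index coprime to $p$ because ranks obviously are, and Tate cohomology is $p$-torsion, hence equal to its $p$-localisation, which is insensitive to such changes. The linear system then forces uniqueness of $(r,s,t)$.

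For existence (item 1), my plan is an induction on $\rank_\Z M$, peeling off summands according to the behaviour of the invariants above. \emph{Case 1:} if $\hat H^0(\Z/p;M) \ne 0$, choose an invariant $e \in M^{\Z/p}$ whose class in $M^{\Z/p}/NM$ is nonzero and whose image in the coinvariants $M_{\Z/p}$ is primitive; then $\Z e \cong \Z$ (trivial action) is a $\Z[\Z/p]$-direct summand of $M$, with retraction given by the projection of $M_{\Z/p}$ onto the $\Z$-coordinate corresponding to $e$. \emph{Case 2:} if $\hat H^0(\Z/p;M) = 0$ but $M^{\Z/p} \ne 0$ (equivalently $M^{\Z/p} = NM \ne 0$), pick $x \in M$ whose norm $Nx$ is primitive in $M^{\Z/p}$; then $\Z[\Z/p]\cdot x \cong \Z[\Z/p]$ (its annihilator in $\Z[\Z/p]$ is trivial since $M$ is $\Z$-torsion-free), and this free submodule is a direct summand because $\mathrm{Ext}^1_{\Z[\Z/p]}(C,\Z[\Z/p])=0$ for every $\Z[\Z/p]$-lattice $C$ (by the augmentation sequence $0 \to I \to \Z[\Z/p] \to \Z \to 0$, dimension shift, and the cohomological triviality of $\Z[\Z/p]$). \emph{Case 3:} if $M^{\Z/p} = 0$, the $\Z/p$-action is free outside the origin, $N$ acts as $0$, and $M$ is a torsion-free $\Z[\xi_p]$-module. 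The approach then follows that of \cref{cor:decomposition:constant:isotropy:v3}: after tensoring with $\Z_{(p)}$, the ring $\Z_{(p)}[\xi_p]$ becomes a discrete valuation ring with uniformiser $1-\xi_p$, and $M_{(p)} \cong \Z_{(p)}[\xi_p]^t \cong I_{(p)}^t$; clearing denominators (necessarily coprime to $p$) from a $\Z_{(p)}$-basis then produces a submodule $\cong I^t$ in $M$ of coprime-to-$p$ index.

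The main obstacle I expect is in case~3: because the ideal class group of $\Z[\xi_p]$ can be nontrivial, the lattice $M$ itself need not be isomorphic to $I^t$ (only $M_{(p)}$ is), so the explicit lift from $M_{(p)}$ to $M$ must be chosen to lie inside a subideal isomorphic to $\mathfrak p = (1-\xi_p) \cong I$ of each fractional-ideal summand of $M$. This is arranged by picking, within each such summand $\mathfrak a$, an element $\alpha \in \mathfrak p^{-1}\mathfrak a \setminus \mathfrak a$; the subideal $\alpha\mathfrak p \subseteq \mathfrak a$ is isomorphic to $\mathfrak p$, and its index is coprime to $p$ because $\mathfrak p^{-1}\mathfrak a/\mathfrak a \cong \mathbb F_p$. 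Cases~1 and~2 become routine once the induction hypothesis is in place.
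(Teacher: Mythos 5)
Your uniqueness argument (item 2) is correct and takes a mildly different route from the paper: the paper obtains the same three linear relations by tensoring the two decompositions with $\Z$ and with $I$ over $\Z[\Z/p]$ and comparing ranks, while you read off $r$ and $t$ directly as $\dim_{\mathbb{F}_p}\hat H^0(\Z/p;M)$ and $\dim_{\mathbb{F}_p}\hat H^1(\Z/p;M)$; your observation that these Tate groups are unchanged under passage to a finite-index submodule of index coprime to $p$ is also right. For item 1 the paper offers no proof at all (it cites \cite[p.~347]{AGPP}), so your existence argument attempts strictly more than the paper does --- and that is where the genuine gaps lie.

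Case 2 is wrong as stated. Take $p=3$ and $M=\Z[\Z/3]\oplus\Z[\xi_3]$, where $g$ acts on the second summand by multiplication by $\xi_3$ (so the second summand is a copy of $I$). Then $M^{\Z/3}=\Z N\oplus 0=NM$, so $\hat H^0(M)=0$ and $M^{\Z/3}\neq 0$: your Case 2 applies. The element $x=(1+g-g^2,\,2)$ satisfies your criterion: $Nx=(N,0)$ generates $M^{\Z/3}$, hence is primitive, and the annihilator of $x$ is zero, so $\Z[\Z/3]x\cong\Z[\Z/3]$. But $(g-1)x=2\,(g^2-1,\,\xi_3-1)$, and $y=(g^2-1,\xi_3-1)\in M$ does not lie in $\Z[\Z/3]x$ (its second coordinate is not divisible by $2$ in $\Z[\xi_3]$), so $\Z[\Z/3]x$ is not saturated in $M$. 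Consequently the quotient $C=M/\Z[\Z/3]x$ has $2$-torsion, is not a lattice, your $\mathrm{Ext}^1$-vanishing does not apply, and $\Z[\Z/3]x$ is not a direct summand. Primitivity of $Nx$ only controls saturation at the prime $p$, not at other primes; in fact over $\Z$ the goal of Case 2 is unattainable in general (a non-split extension of $\Z$ by a non-principal ideal of $\Z[\xi_p]$ has $\hat H^0=0$ and nonzero invariants but contains no free $\Z[\Z/p]$ direct summand). The repair is to run the whole induction over $\Z_{(p)}$, as you already do in Case 3, and clear denominators only at the very end: over $\Z_{(p)}$, ``$Nx$ primitive in $M^{\Z/p}$'' is equivalent to $Nx\notin pM$, and since $(g-1)^{p-1}\equiv N \bmod p$ this is exactly what forces $M/\Z_{(p)}[\Z/p]x$ to be torsion-free, after which your $\mathrm{Ext}$ argument does split it.

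Case 1 also rests on an unproved step: you must show that when $\hat H^0(M)\neq 0$ there \emph{exists} an invariant $e$ whose image in $M_{\Z/p}/\operatorname{tors}$ is primitive; the word ``choose'' is doing real work. At primes $q\neq p$ this is easy (after inverting $p$ the group ring splits and invariants surject onto coinvariants modulo torsion), but at $q=p$ the required statement --- that the image of $M^{\Z/p}$ in $M_{\Z/p}/\operatorname{tors}$ is not contained in $p\cdot(M_{\Z/p}/\operatorname{tors})$ whenever $\hat H^0(M)\neq0$ --- is essentially a fragment of the classification of $\Z_{(p)}[\Z/p]$-lattices you are trying to reprove. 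It is true, for instance via the perfectness of the cup-product pairing $\hat H^0(\Z/p;M)\times\hat H^0(\Z/p;\operatorname{Hom}_\Z(M,\Z))\to\Z/p$, but some such argument (or a citation to the classification, as the paper does) must be supplied. Case 3 and the uniqueness part are fine.
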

\begin{proof}
    The first statement is pointed out in \cite[p. 347]{AGPP}. The second statement follows from \cref{remark:uniqueness:rst}.
\end{proof}

\begin{proposition}[Existence of $G$-equivariant $(r,s,t)$-decompositions]\label{prop:virtual:rst:decompositions}
Let $M$ be a $\Z[G]$-module. Assume $M$ is finitely generated and free considered as an abelian group. Then there exist a finite index $\Z[G]$-submodule $N$ of $M$ such that such an index is coprime with $p$, and  a $G$-invariant $(r,s,t)$-decomposition for $N$, i.e.  $N$ can be decomposed as
$$N \cong \Z^r\oplus \Z[\Z/p]^s\oplus I^t$$
such that each factor is a $\Z[G]$-module.
\end{proposition}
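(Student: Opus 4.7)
The plan is to localize at $p$, where $|G_p|$ is a unit in $\Z_{(p)}$, and produce a $\Z_{(p)}[G]$-equivariant $(r,s,t)$-decomposition of $M_{(p)} := M \otimes_\Z \Z_{(p)}$; I will then descend to an integral decomposition by clearing denominators, which only introduces factors coprime to $p$.

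First I would exploit the factorization of $x^{|G_p|}-1$ over $\Z_{(p)}$ to decompose $\Z_{(p)}[G_p]$ as a finite product of Dedekind domains; the associated primitive central idempotents sit inside $\Z_{(p)}[G_p]\subset\Z_{(p)}[G]$ and are central there (since $G$ is abelian). These idempotents give a canonical $\Z_{(p)}[G]$-equivariant decomposition of $M_{(p)}$ into $G_p$-isotypic blocks, each of which is a $\Z_{(p)}[\Z/p]$-lattice over a Dedekind domain. On each block I would apply an extension of \cref{lem:existence:uniqueness:rst}(1)---the Reiner-type classification of $\Z[\Z/p]$-lattices---which adapts because $p$ is unramified in each of these Dedekind factors (their relative degree divides $|G_p|$, coprime to $p$). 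Reassembling yields a $\Z_{(p)}[G]$-equivariant decomposition $M_{(p)} = M_{(p)}^{(r)} \oplus M_{(p)}^{(s)} \oplus M_{(p)}^{(t)}$ with summands of the claimed $\Z_{(p)}[\Z/p]$-types.

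For the descent, let $e_r,e_s,e_t\in \mathrm{End}_{\Z_{(p)}[G]}(M_{(p)})$ be the orthogonal idempotents summing to the identity that correspond to the above decomposition. Their matrix entries lie in $\Z_{(p)}$, so there is a positive integer $d$ coprime to $p$ with $de_i\in \mathrm{End}_{\Z[G]}(M)$ for each $i$. Setting $N_i := (de_i)(M)\subseteq M$, one checks using $de_r+de_s+de_t=d\cdot\mathrm{id}_M$ and the rational orthogonality of the $e_i$ that $N_r\oplus N_s\oplus N_t = dM$ is a $\Z[G]$-submodule of $M$ of finite index $d^{\rank M}$, which is coprime to $p$. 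Since $N_i\otimes \Z_{(p)} = M_{(p)}^{(i)}$, the uniqueness in \cref{lem:existence:uniqueness:rst}(2) forces $N_r\cong \Z^r$, $N_s\cong \Z[\Z/p]^s$, $N_t\cong I^t$ as $\Z[\Z/p]$-modules, giving the desired $G$-invariant $(r,s,t)$-decomposition.

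The hard part will be justifying the extension of the Reiner-type classification (\cref{lem:existence:uniqueness:rst}(1)) from $\Z[\Z/p]$-lattices to $\Z[\Z/p]$-lattices over the Dedekind factors of $\Z_{(p)}[G_p]$; one must adapt, rather than merely cite, the structure theory of indecomposable lattices. A more elementary alternative would be to start from a (non-$G$-equivariant) $(r,s,t)$-decomposition of some finite coprime-to-$p$ index submodule given by \cref{lem:existence:uniqueness:rst}(1) applied directly to $M$, and then to upgrade it to a $G$-equivariant decomposition via $G_p$-averaging using \cref{virtual:maschke}; this keeps us inside the framework already developed in the paper, but demands careful book-keeping to ensure that every finite index incurred at each averaging step remains coprime to $p$.
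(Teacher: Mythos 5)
The fatal step is the last one in your descent. You invoke \cref{lem:existence:uniqueness:rst}(2) to conclude $N_s\cong\Z[\Z/p]^s$ and $N_t\cong I^t$ from $N_i\otimes\Z_{(p)}\cong M_{(p)}^{(i)}$, but that lemma only says that two submodules of a given module which already \emph{are} of types $(r,s,t)$ and $(r',s',t')$ have the same type triple; it gives no way to read off the integral $\Z[\Z/p]$-isomorphism type of a lattice from its $p$-localization, and in fact no such implication holds. For example, $N_t\otimes\Z_{(p)}\cong I_{(p)}^t$ forces the norm element to annihilate $N_t$, so $N_t$ is a $\Z[\zeta_p]$-lattice and, by the Steinitz classification, $N_t\cong I_1\oplus\cdots\oplus I_t$ for ideals $I_j\subseteq\Z[\zeta_p]$; this is isomorphic to $I^t$ only when the product of the ideal classes is trivial, which can fail because the class group of $\Z[\zeta_p]$ is nontrivial (e.g.\ $p=23$). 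Similarly, your $N_s$ is only locally free of rank $s$ over $\Z[\Z/p]$, and such lattices need not be free. Only $N_r\cong\Z^r$ descends for free (a trivial action on $N_r\otimes\Z_{(p)}$ is trivial on $N_r$). So as written you obtain a $\Z_{(p)}[G]$-equivariant decomposition of $M_{(p)}$, not the integral statement of \cref{prop:virtual:rst:decompositions}; and re-applying \cref{lem:existence:uniqueness:rst}(1) inside each $N_i$ does not repair this, because the finite-index submodule it produces need not be $G$-invariant. (A minor point: $N_r\oplus N_s\oplus N_t$ contains $dM$ but need not equal it; that does not hurt the coprimality of the index.)

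There is also the gap you flag yourself: the extension of the Reiner-type classification to lattices over $D[\Z/p]$, with $D$ a Dedekind factor of $\Z_{(p)}[G_p]$, is asserted rather than proved, so even the localized equivariant decomposition is incomplete as written (plausible, since each $D$ is a semilocal principal ideal domain in which $p$ is unramified, but it needs an argument). For comparison, the paper's proof is essentially your ``elementary alternative'', and the substance there is not averaging alone: starting from the integral, non-equivariant $(r,s,t)$-submodule provided by \cref{lem:existence:uniqueness:rst}(1), one shows that the $\Z^r$-factor and the $I^t$-factor are automatically mapped into themselves by $G$ --- using that their nonzero elements have $\Z/p$-isotropy equal to $\Z/p$ and trivial, respectively, and comparing the $(r,s,t)$-types of suitable quotients via \cref{lem:existence:uniqueness:rst}(2) --- and then splits off complements with \cref{virtual:maschke}, which only changes indices by powers of $|G_p|$, coprime to $p$. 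If you want to salvage your route, you must additionally control the Steinitz/locally free class data during the descent from $\Z_{(p)}$ to $\Z$; that is exactly the difficulty the paper's isotropy-plus-uniqueness argument avoids.
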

\begin{proof}
By \cref{lem:existence:uniqueness:rst}(1), there exists a finite index $\Z[G]$-submodule $M'$ of $M$ such that such an index is coprime with $p$, and $M'$ is of type $(r,s,t)$ as a $\Z[\Z/p]$-module.  Consider any $(r,s,t)$-decomposition $\Z^r\oplus \Z[\Z/p]^s\oplus I^t$ of $M'$. Every element of $M'$, via this decomposition can be expressed as a vector $(a,b,c)$, we will use this notation through the proof.

First we will prove that $|G_p|\cdot\Z^r$ is $G$-invariant. Let $g\in G$, then $g$ defines an automorphism $g:M'\to M'$. Take $(x,0,0)$ in $M$ such that $x$ is an element in the canonical basis in $\Z^r$. Denote $g(x,0,0)=(y,z,w)$, it is enough to prove that $z=0$ and $w=0$. Note that the $\Z/p$-stabilizer of $(x,0,0)$ is $\Z/p$, whilst the $\Z/p$-stabilizer of $w$ is trivial as the $\Z/p$-action on $I$ is free. We conclude that $w=0$. Now suppose that $z\neq 0$ in order to get a contradiction. Let $C$ and $D$ be the $\Z[\Z/p]$-modules generated by $(x,0,0)$ and $g(x,0,0)$, respectively, in particular $D=gC$ and both are isomorphic to $\Z$. Since $z\neq 0$ we get $ D\cap \Z^r=0$. On the other hand, $g$ induces an isomorphism on quotients
 $$\widetilde{g}:M'/C \to M'/D.$$
By the choice we made of $x$, $M'/C$ is clearly of type $(r-1,s,t)$. Finally notice that, since $ D\cap \Z^r=0$,  $M'/D$ contains a copy of $\Z^r$, which  forces $M'/D$ to be of type $(r',s',t')$ with $r'\geq r$. This contradicts the uniqueness of the $(r,s,t)$-decomposition, see \Cref{lem:existence:uniqueness:rst}.

\vskip 10pt

Denote $G_p$ the quotient of $G$ by $\Z/p$. By Lemma \ref{virtual:maschke}, there is a  $\Z[\Z/p][G_p]$-module $N$  such that    $|G_p|\cdot M'\cong \Z^r \oplus |G_p|\cdot N$. Since we have isomorphisms of rings $\Z[\Z/p][G_p]\cong [\Z/p\times G_p]\cong \Z[G]$, we conclude that the aforementioned isomorphism of modules is actually an isomorphism of $\Z[G]$-modules. By \cref{lem:existence:uniqueness:rst}, we have that $N$ is of type $(0,s,t)$, hence $N$ is isomorphic as a  $\Z[\Z/p]$-module to $\Z[\Z/p]^s\oplus I^t$.

We will prove that, in the decomposition of $N$,  $|G_p|\cdot I^t$ is $G$-invariant. The argument is similar as for the $\Z^r$-factor. Let $x\in I^t$ be a generator of one of the $I$-factors, then $g(0,x)=(y,z)\in \Z[\Z/p]^s\oplus I^t$. Suppose that $y\neq 0$. Let $C$ be the $\Z[\Z/p]$-submodule generated by $g(0,x)$, we claim that $C\cap I^t=0$. In fact this intersection is a proper submodule of $C$ and therefore it has $\Z$-rank strictly less than $p-1$, therefore the $(r,s,t)$ decomposition of any finite index submodule can only be of the form $(r,0,0)$, but actually $r$ must be $0$ since $C$ has no fixed points under the $\Z/p$-action.
Taking quotients, we have an isomorphism $$\widetilde{g}: \Z[\Z/p]^s\oplus I^t/C\to  \Z[\Z/p]^s\oplus I^t/gC$$ but the left hand side is a module of type $(0,s,t-1)$ and  the right hand side contains a submodule of finite index with at least $t$ copies of $I$ as direct summand, then in the $(r',s',t')$ decomposition, $t'\geq t$, which is a contradiction. Therefore $y=0$. Finally again by Lemma \ref{virtual:maschke}, $|G_p|\cdot I^t$ has complement $N'$ in $N$ as $\Z[\Z/p][G_p]$-module, which by the uniqueness of the $(r,s,t)$ decomposition must contain a submodule of finite index  of type $(0,s,0)$. This finishes the proof.
\end{proof}

\bibliographystyle{alpha} 
\bibliography{myblib}
\end{document}